\newtheorem{theorem}{Theorem}[section]
\newtheorem{proposition}{Proposition}[section]
\newtheorem{definition}{Definition}[section]
\newtheorem{lemma}{Lemma}[section]
\newtheorem{remark}{Remark}[section]
\numberwithin{equation}{section}
\def\e{\varepsilon}
\def\O{\Omega}
\def\ds{\displaystyle}
\def\set{/\kern-.51em o}
\def\eq{\mathop{\vrule height2,6pt depth-2,3pt
         width -1pt\kern 0pt =}}
\def\R{{\mathbb{R}}}
\def\Z{{\mathbb{Z}}}
\def\V{{\mathbb{V}}}
\def\K{{\mathbb{K}}}
\def\H{{\mathbb{H}}}
\begin{document}
\begin{center}
{\bf \large Homogenization via unfolding in periodic layer with contact}
\end{center} 
  \centerline{Georges Griso, Anastasia Migunova, Julia Orlik}

\begin{abstract}
In this work we consider the elasticity problem for two domains separated by a heterogeneous layer. The layer has an $\e-$periodic structure, $\e\ll1$, including a multiple micro-contact between the structural components. The components are surrounded by cracks and can have rigid displacements. The contacts are described by the Signorini and Tresca-friction conditions. In order to obtain preliminary estimates modification of the Korn inequality for the $\e-$dependent periodic layer is performed.

An asymptotic analysis with respect to $\e\to 0$ is provided and the limit problem is obtained, which consists of the elasticity problem together with the transmission condition across the interface. The periodic unfolding method is used to study the limit behavior.
\end{abstract}

\section{Introduction}
Contact problems for the domains with highly-oscillating boundaries were considered in different works (see e.g. \cite{kikuchi, mik, yosif}) as well as problems on the domains including thin layer \cite{cior, nrj, geym}. The elasticity problem for a heterogeneous domain with a Tresca--type friction condition on a microstructure (involving inclusions and cracks) was considered in \cite{cior3} and Korn's inequality for disconnected inclusions was obtained. 

In this paper we are concerned with an elasticity problem in a domain containing a thin heterogeneous layer of the small thickness $\e$. We consider the case in which the stiffness of the layer is also of the order $\e$. The contact between structural components in the layer is described by the Tresca--friction contact conditions. Our aim is to study the behavior of the solutions of the microscopic equations when $\e$ tends to zero.

For the derivation of the limit problem we use the periodic unfolding method which was first introduced in \cite{cior2}, later developed in \cite{CiDaGr} and was used for different types of problems, particularly, contact problem in \cite{cior3} and problems for the thin layers in \cite{cior}. 

However, additional difficulties arise in the proving uniform boundedness for the minimizing sequence of the solutions. The idea consists in controlling the norm by the trace on the boundary of the domain and, therefore, by the norm on the outside domain. Working this way we first obtain estimate for the Dirichlet domain, through which an estimate of the trace is obtained and, therefore, norm on the structural components of the layer. Two Korn's inequalities are introduced: for the inclusions placed in the heterogeneous periodic layer (based on the results from \cite{cior3}) and for the connected part of the layer. The main result of the study is an asymptotic model for the layer between elastic blocks.

The paper is organized as follows. Section 2 gives the geometric setting for the $\e$-periodic problem, including the unit cell. In Section 3 we give inequalities related to the unfolding operator on the interface surfaces, then establish a uniform Korn inequality for the perforated matrix domain. Then, two unilateral Korn inequlities are proved with their applications to the oscillating inclusions and the matrix of the layer. Section 4 deals with the convergence result. In Section 5 the problem for fixed $\e$ is introduced. At last, in Section 6 the limit problem is obtained and the case of the linearized contact conditions is considered.

\subsection{Notations}
\begin{itemize}
 \item Let $\mathcal O$ be a bounded domain in $\mathbb R^3$ with a Lipschitz boundary. For any $v \in H^1 (\mathcal O; \mathbb R^3)$,  the normal component of a vector field $v$ on the boundary of $\mathcal O$ is denoted $v_{\nu} = v_{|\partial O} \cdot \nu$, while the tangential component $v_{|\partial O} - v_{\nu} \nu$ is denoted $v_{\tau}$ ($\nu$ is the outward unit normal vector to the boundary). 

 \item Let $S^0$ be a closed set in $\mathbb R^3$: a finite union  of disjoint orientable surfaces of class ${\cal C}^1$. Then for every piece of surface, we choose a continuous field of unit normal vector denoted $\nu$.
 
 \item The strain tensor of a vector field $v$ is denoted by $e (v)$,
$$ e^{ij} (v)= \frac{1}{2} \Big( \frac{\partial v_i}{\partial x_j} + \frac{\partial v_j}{\partial x_i} \Big) \quad (i, j) \in \{ 1, 2, 3 \}^2.$$
The kernel of $e$ in a connected domain is the finite dimensional space of rigid motions denoted by $\mathcal R$.
\end{itemize}

\section{Geometric statement of the problem}

In the Euclidean space $\mathbb R^2$ consider a connected domain $\omega$ with Lipschitz boundary and let $L > 0$ be a fixed real number. Define:

%
%
%
\begin{gather*}
 \begin{array}{ll}
  \Omega^{b} &= \omega \times (-L, 0),\\
  \Omega^{a} &= \omega \times (0, L),\\
  \Sigma &= \omega \times \{0\},\\
  \Omega &= \Omega^{a} \cup \Omega^{b} \cup \Sigma=\omega\times (-L,L).
 \end{array}
\end{gather*}
To describe a structure with a layer introduce the notations:
\begin{equation}
\label{doms}
\begin{array}{ll}
 \Omega_{\e}^{a} &= \omega \times (\e, L),\\
 \Omega_{\e}^M &= \omega \times (0, \e),\\
 S_{\e}^{a} &= \omega \times \{ \e \},
\end{array}
\end{equation}
Here $\e$ is a small parameter corresponding to the thickness of the layer.

The assemblage is fixed on $\Gamma$, which is a non-empty part of $\partial \Omega^{b}$ ($\Gamma$ is a set where the Dirichlet condition will be prescribed). Furthermore, we assume that the external boundary of the layer $\partial \Sigma \times [0, \e]$ is traction free.


\begin{wrapfigure}{r}{0.38\textwidth}
\center
\includegraphics[width = 0.27\textwidth]{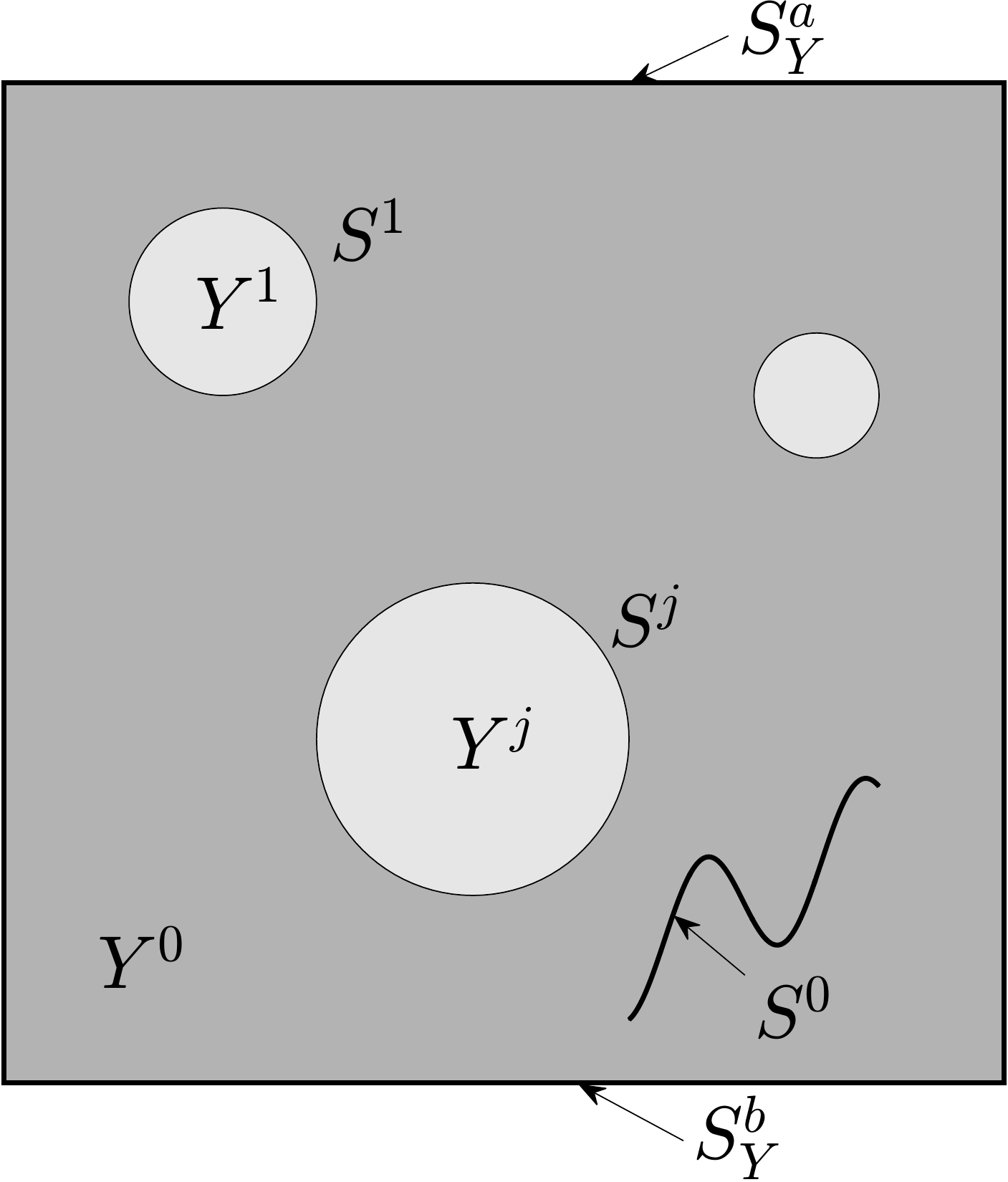}
\caption{The unit cell $Y$}
\label{fg}
\end{wrapfigure}

The layer $\Omega_{\e}^M$ has periodic in-plane structure. The unit cell is denoted by $Y$
\begin{equation*}
Y = \big( 0, 1 \big)^3\subset \R^3, \qquad Y_{\e} = \e Y.
\end{equation*}
Additionally,
\begin{equation*}
 S^{b}_Y = \left\{ y \in Y : y_3 = 0\right\},\qquad  S^{a}_Y = \left\{ y \in Y : y_3 =1 \right\}
\end{equation*}
are the lower and upper boundaries of $Y$.

There are two kinds of cracks, the first ones $S^1, \ldots, S^m$ (the ``closed cracks'') are the closed boundaries of open Lipschitzian sets $Y^j, j \in \{ 1, \ldots, m\}$. We assume that every $S^j, j \in \{ 1, \ldots, m \}$, has only one connected component and $ \bigcup_{j = 1}^m \overline{Y^j } \subset Y$. The other cracks (the ``open cracks''), which union is denoted by $S^0$, are the finite union of closed Lipschitz surfaces included in  $Y \setminus \bigcup_{j = 1}^m \overline{Y^j }$ (see Figure \ref{fg}). 
\medskip


We set
$$Y^0=Y\setminus \Big(\bigcup_{j=1}^{m}\overline{Y^{j}}\cup S^0\Big)$$

\noindent and we assume that there exists $t_0>0$ such that 
$$\forall x^{'}\in S^0,\qquad [x^{'}-t_0\nu(x^{'}), x^{'}+t_0\nu(x^{'})]\setminus \{x^{'}\}\subset Y^0.$$

Since the set of cracks $\ds \bigcup_{j=0}^m S^j$ is a closed subset strictly included in $Y$, there exists $\eta>0$ such that
$$\forall x\in \bigcup_{j=0}^m S^j, \qquad \hbox{dist}(x, \partial Y)\geq \eta.$$
Denote
$$Y'=(0,1)^2.$$
Recall that in the periodic setting almost every point  $z\in \mathbb R^3$ (resp. $z'\in \R^2$) can be written as 
$$
\begin{aligned}
z&= \left[z\right]_Y + \left\{z\right\}_Y,\qquad \quad \left[z\right]_Y \in \Z^3,\quad \left\{z\right\}_Y\in Y,\\
\text{( resp. } z'&= \left[z'\right]_{Y'}+ \left\{z'\right\}_{Y'},\qquad \left[z'\right]_{Y'} \in \Z^2,\quad \left\{z'\right\}_{Y'}\in Y'\text{)}.
\end{aligned}
$$

Denote by
\begin{itemize}
 \item $\ds \Xi_{\e}=\big\{\xi\in \Z^2\;|\; \e (\xi + \e Y')\subset \omega\big\}$, $\ds \Xi_{\e}^M=\Xi_\e\times\{0\}$,
 \item $\widehat\omega_{\e}= \text{interior }\ds \Big( \bigcup_{\xi \in \Xi_{\e}} \e (\xi + \overline{Y'}) \Big)$,  $\widehat\O_{\e}^M = \text{interior }\ds \Big( \bigcup_{\xi \in \Xi^M_{\e}} \e (\xi +\overline{Y}) \Big)=~\widehat{\omega}_\e\times(0,\e)$,
 \item $\Lambda_{\e}=\omega \setminus \overline{\widehat\omega_{\e}}$, $\Lambda_{\e}^M=\O_{\e}^M \setminus \overline{\widehat\O_{\e}^M}=\Lambda_{\e}\times (0,\e)$.
\end{itemize}
The open subset of $\O_{\e}^M$ contains the parts of cells intersecting the lateral boundary $\partial \omega\times(0, \e)$.

For $j=1,\ldots, m$ introduce the set  
$$\O_{\e}^{j} = \Big\{x \in \,\widehat\O_{\e}^M \;\;  |  \;\;\e\left\{\frac{x}{\e}\right\}_{Y} \in Y^{j} \Big\}.$$
The boundary  $\partial \O_{\e}^{j}$ is the set  of ``closed cracks'' associated with $S^{j}$, 
$$ \partial \O_{\e}^{j} \doteq S_{\e}^{j} = \Big\{x\in \,\widehat\O_{\e}^M\;\; | \;\;\e\left\{\frac{x}{ \e}\right\}_{Y} \in S^{j}=\partial Y^{j} \Big\} \;  .$$
For $j=0$ set 
$$S_{\e}^{0} = \Big\{x \in \,\widehat\O_{\e}^M\;\; | \;\;\e\left\{\frac{x}{ \e}\right\}_{Y} \in S^{0} \Big\}$$
and 
$$
\Omega_{\e}^0 \doteq \displaystyle \O_{\e}^M \setminus\Big( {\mathop{\bigcup}\limits _{j=1,\ldots, m}} \overline{\O_{\e}^{j}} \cup S_{\e}^{0}\Big),\quad \qquad \widehat{\Omega}_{\e}^0 \doteq\Omega_{\e}^0 \cap \widehat{\Omega}^M_\e.
$$
The union of all the cracks is denoted by $S_{\e}$ 
$$S_{\e} =  {\mathop{\bigcup}\limits _{j=0,1, \ldots, m}} S_{\e}^{j}.$$
We define the set $\Omega_{\e}$ and $\Omega_{\e}^*$ as follows
$$
\Omega_{\e} \doteq \Omega \setminus S_{\e}\qquad \quad \Omega^*_{\e} \doteq \Omega\setminus\Big( {\mathop{\bigcup}\limits _{j=1,\ldots, m}} \overline{\O_{\e}^{j}} \cup S_{\e}^{0}\Big).
$$
Note that from these definitions it is clear that there are no cracks in the part of the layer $\Lambda_{\e}^M$.
\medskip

For a function $v$ defined on $\O^*_{\e}$, for simplicity, we denote its restriction to $\O_{\e}^{j}$ by $v^{j}$ 
\begin{equation*}
\label{vj}
v^{j}  \doteq v_{|\O_{\e}^{j}} \quad \hbox {for }  j= 1,\ldots,m.
\end{equation*}
%

In the following, for any bounded set ${\cal O}$ and  $\varphi\in L^{1}({\cal O})$, ${\cal M}_{{\cal O}}(\varphi)$ denotes the mean value of $\varphi$ over ${\cal O}$, i.e.
$${\cal M}_{\cal O}(\varphi) = \frac{1}{ |{\cal O}|}\int_{\cal O} \varphi\, dy.$$

\section{Some inequalities related to unfolding and the geometric domain}
\subsection{The boundary-layer unfolding operator $\mathcal T_{\varepsilon}$, $\mathcal T^{bl, j}_{\varepsilon}$}
Here we recall the definition and the properties of the boundary-layer unfolding operator (for more details see \cite{cior}). 
  
\begin{definition}
 For $\varphi$ Lebesgue-measurable  on $\O_{\e}^M$ (resp. on $\Omega^{j}_\e,\;\; j=1,\ldots,m$), the unfolding operator $\mathcal T_{\e}$ is defined by
 \begin{equation*}
 \mathcal T_{\varepsilon}(\varphi)(x',y)=\left\{
 \begin{array}{ll}
 \ds \varphi \Big(\e\Big [\frac{(x', 0)}{\e}\Big]_Y + \e y\Big) & \text{ a.e. for } (x',y) \in \widehat \omega_{\e} \times Y,\quad \text{(resp. for a.e. }(x',y) \in \widehat \omega_{\e} \times Y^j\text{)}\\
 0 & \text{ a.e. for } (x',y) \in \Lambda_\varepsilon \times Y,\quad \text{(resp. for a.e. }(x',y) \in \Lambda_\varepsilon \times Y^j\text{)}.
 \end{array}
 \right.
 \end{equation*}
  For $\varphi \in L^p(S^{j}_\e)$, $j\in 1,\ldots, m$, Lebesgue-measurable  on $S^{j}_{\e}$, the unfolding operator $\mathcal T^{bl, j}_{\varepsilon}$ is defined by
 \begin{equation*}
 \mathcal T^{bl, j}_{\varepsilon}(\varphi)(x',y)=\left\{
 \begin{array}{ll}
 \ds \varphi \Big(\e\Big [\frac{(x', 0)}{\e}\Big]_Y + \e y\Big) & \text{ a.e. for } (x',y) \in \widehat \omega_{\e} \times S^{j},\\
 0 & \text{ a.e. for } (x',y) \in \Lambda_\varepsilon \times S^{j}.
 \end{array}
 \right.
 \end{equation*}
\end{definition}
\begin{remark} If $\phi\in W^{1,p}(\Omega^j_\e)$, $j=1,\ldots,m$, $p\in[1,+\infty]$, $\mathcal T^{bl, j}_{\varepsilon}(\varphi)$ is just the trace of $\mathcal T_{\varepsilon}(\varphi)$ on $\omega \times S^j$.
\end{remark} 
\medskip

 \begin{proposition}[Properties of the operators $\mathcal T_{\varepsilon}$, $\mathcal T^{bl,j}_{\varepsilon}$] \leavevmode
 \label{unfpr}
 \begin{enumerate}
 \item For any $\varphi \in L^1 (\Omega_{\e}^M)$,
 \begin{equation*}
 \int_{\widehat \Omega_{\e}^M} \varphi \, dx = \frac{\e}{|Y'|} \int_{\omega \times Y} \mathcal T_{\e}(\varphi)(x',y) dx' dy.
 \end{equation*}
 
 \item For any $\varphi \in L^2 (\Omega_{\e}^M)$,
 \begin{equation*}
 \|\varphi\|_{L^{2}(\widehat \Omega_{\e}^M)} = \sqrt \e \|\mathcal T_{\e}(\varphi)\|_{L^{2}(\omega_{\e} \times Y)}.
 \end{equation*}
 
 \item Let $\varphi \in H^{1}(\Omega_{\e}^M)$. Then, 
 \begin{equation*}
  \nabla_{y}(\mathcal T_{\varepsilon}(\varphi)) = \varepsilon \mathcal T_{\varepsilon}(\nabla \varphi) \quad \text{ a.e. in } \omega \times Y.
 \end{equation*}
 In a similar way, for any $v \in H^1 (\O_{\e}^M; \mathbb R^3)$
 \begin{equation*}
e_{y}(\mathcal T_{\e}(\varphi))= \e \mathcal T_{\e}(e(\varphi)) \quad \text{ a.e. in } \omega \times Y.
 \end{equation*}

 \item For any $\psi \in L^{p}(S^j_{\e}), p \in [1, +\infty]$
 \begin{equation}
 \label{4.1ij}
 \int_{S^j_{\e}} \psi \, dx = \frac{1}{|Y'|}\int_{\omega_{\e} \times S^j} \mathcal T^{bl, j}_{\e}(\psi)(x',y) dx' d\sigma(y)
 \end{equation} 
 and
 \begin{equation} 
 \label{4.1iij}
\|\psi\|_{L^p(S^j_{\e})}= \frac{1}{|Y'|^{{1/p}}}\|\mathcal T^{bl, j}_{\e}(\psi)\|_{L^p(\omega \times S^{j})}.
\end{equation}
 \end{enumerate}
 \end{proposition}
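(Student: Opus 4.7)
The plan is to verify each of the four identities by the same cell-by-cell argument, relying on two elementary facts: that $\widehat{\Omega}^M_\e$ decomposes as the disjoint union $\bigsqcup_{\xi\in\Xi_\e}\e(\xi+Y')\times(0,\e)$ of the periodicity cells, and that for every $x'$ in a single base square $\e(\xi+\overline{Y'})$ the bracket $[(x',0)/\e]_Y$ equals $(\xi,0)$, so that $\mathcal T_\e(\varphi)(x',y)=\varphi(\e(\xi,0)+\e y)$ is constant in $x'$ on that square (and likewise for $\mathcal T^{bl,j}_\e$).

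First I would prove (1). On each cell I would perform the affine change of variables $x=\e(\xi,0)+\e y$ to obtain $\int_{\e(\xi+Y')\times(0,\e)}\varphi\,dx=\e^3\int_Y\varphi(\e(\xi,0)+\e y)\,dy$. Since the integrand is constant in $x'$ over $\e(\xi+Y')$, whose planar area is $\e^2|Y'|$, I would rewrite this as $\frac{\e}{|Y'|}\int_{\e(\xi+Y')\times Y}\mathcal T_\e(\varphi)(x',y)\,dx'\,dy$. Summing over $\xi\in\Xi_\e$ and using $\mathcal T_\e(\varphi)\equiv 0$ on $\Lambda_\e\times Y$ extends the domain to $\omega\times Y$ and yields (1). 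Applying the same identity to $|\varphi|^2$ (and noting $|Y'|=1$) immediately gives (2).

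For (3), I would work on one cell: the map $y\mapsto\mathcal T_\e(\varphi)(x',y)$ is just $\varphi$ composed with the dilation $y\mapsto\e(\xi,0)+\e y$, so the chain rule gives $\nabla_y\mathcal T_\e(\varphi)=\e(\nabla\varphi)(\e(\xi,0)+\e y)=\e\,\mathcal T_\e(\nabla\varphi)$ a.e., and symmetrizing this identity produces the strain-tensor formula. For (4), I would reproduce the argument of (1) on $S^j_\e=\bigsqcup_{\xi\in\Xi_\e}\bigl(S^j_\e\cap\e((\xi,0)+Y)\bigr)$: the affine change of variables turns the surface element on $S^j_\e$ into $\e^2\,d\sigma(y)$ on $S^j$, and the constancy of $\mathcal T^{bl,j}_\e(\psi)$ in $x'$ on $\e(\xi+Y')$ again supplies the factor $|Y'|^{-1}$ on the right-hand side of (4.1ij); applying that identity to $|\psi|^p$ and taking $p$-th roots yields (4.1iij).

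No substantive obstacle arises; everything reduces to a finite disjoint-union decomposition plus a linear change of variables on each cell. The only care needed is in the bookkeeping of the powers of $\e$ -- the factor $\e^3$ from the $3$D volume element, $\e^2$ from the surface Jacobian, and $\e$ from the gradient scaling -- together with the factor $|Y'|^{-1}$, which records the fact that the unfolded object is piecewise constant in $x'$ on a base cell of planar area $\e^2|Y'|$.
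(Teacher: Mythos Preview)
Your proof is correct and follows exactly the cell-by-cell change-of-variables strategy the paper uses for property~(4); for properties (1)--(3) the paper simply cites \cite{cior} rather than writing out the argument, but what you have written is the standard computation one finds there.
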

 
 \begin{proof}
 Proofs for the properties 1-3 can be found in \cite{cior}. For the last property starting from the right-hand side we obtain
 \begin{equation*}
 \begin{aligned}
& \int_{\omega \times S^j} \mathcal T^{bl, j}_{\e} (\psi) (x', y) dx' d\sigma(y) = \int_{\widehat{\omega}_{\e} \times S^j} \mathcal T^{bl, j}_{\e} (\psi)(x', y) dx' d\sigma(y) = \sum_{\xi' \in \Xi_{\e}} \int_{(\e \xi' + \e Y') \times S^j} \mathcal T^{bl, j}_{\e} (\psi) dx' d\sigma(y)\\
&\hskip 2cm = \e^2 \sum_{\xi' \in \Xi_{\e}} \int_{S^j} \psi (\e \xi' + \e s) d\sigma(s) = \sum_{\xi' \in \Xi_{\e}} \int_{(\e \xi' + \e S^j)} \psi (x) d\sigma(x) = \int_{S_{\e}^j} \psi (x) d\sigma(x).
\end{aligned}
 \end{equation*}
\vskip-11mm
 \end{proof}
 \medskip
 
\subsection{Unilateral Korn inequality}
Let ${\cal O}$ be a bounded  open subset  of $\R^3$. We denote
$${\cal R}=\Big\{ r\in H^1({\cal O}; \R^3)\;|\; r(x)= a+b\land x,\;\; (a,b)\in \R^3\times \R^3\Big\}.$$
We recall that a bounded domain ${\cal O}$ satisfies the Korn-Wirtinger inequality if there exists a constant $C_{\cal O}$ such that for every $v\in H^1({\cal O}; \mathbb{R}^3)$ there exists $r\in {\cal R}$ such that
\begin{equation}\label{Eq. 37}
\|v-r\|_{H^1({\cal O};\mathbb{R}^3)}\le C_{\cal O}\|e(v)\|_{L^2({\cal O}; \R^{3\times 3})}.
\end{equation}
A domain like ${\cal O}$ is called a Korn-domain. We equip $H^1({\cal O}; \R^3)$ with the following scalar product
$$<u,v>=\int_{\cal O} e(u):e(v)\, dx+\int_{\cal O} u\cdot v dx.$$
If ${\cal O}$ is a Korn-domain, the associated norm is equivalent to the usual norm of $H^1({\cal O}; \R^3)$.
\begin{definition}For a Korn-domain ${\cal O}$  denote 
$$W^1({\cal O})\doteq\Big\{v\in H^1({\cal O};\mathbb {R}^3)\;\;|\;\; \int_{\cal O} v(x)\cdot r(x) dx=0\;\;\hbox{for all } r\in {\cal R}\Big\}.$$
\end{definition}
 Observe that there exists a constant such that for every $v\in W^1({\cal O})$ we get
$$\|v\|_{H^1({\cal O};\mathbb{R}^3)}\le C\|e(v)\|_{L^2({\cal O}; \R^{3\times 3})}.$$
Considering the orthogonal decomposition $H^{1}({\cal O};\mathbb{R}^3)=W^1({\cal O})\oplus {\cal R}$,  every $v\in H^1({\cal O};\R^3)$ can be written as 
$$ v= (v-r_v)+r_v,\qquad v-r_v\in W^1({\cal O}),\;\; r_v\in {\cal R}.$$
The map $v\longmapsto r_v$ is the orthogonal projection of $v$ on ${\cal R}$. From \eqref{Eq. 37} we get
\begin{equation}\label{Eq. 38}
\|v-r_v\|_{H^1({\cal O};\mathbb{R}^3)}\le C_{\cal O}\|e(v)\|_{L^2({\cal O}; \R^{3\times 3})}.
\end{equation}
We also recall that if ${\cal O}$ is a bounded Lipschitz domain, there exists a constant $C$ such that
\begin{equation}\label{TR}
\forall v \in H^1({\cal O}; \mathbb R^3), \quad \|v\|_{L^2(\partial {\cal O})}\leq C \big(\|e(v)\|_{L^2({\cal O})} + \|v\|_{L^2({\cal O})}\big).
\end{equation}
\smallskip
 
We will use the following proposition from \cite{cior3}.
\begin{proposition} [Unilateral Korn inequality]  \label{newKorn}
If ${\cal O}$ is a bounded Lipschitz domain, there exists a constant $C$ such that
\begin{equation} \label{nonlockeddomain}
\forall v \in H^1({\cal O} ; \R^3), \quad \|v\|_{H^1({\cal O})}\leq C \big(\|e(v)\|_{L^2({\cal O})} + \|(v_{\nu})^{+}\|_{L^1(\partial {\cal O})} + \|v_\tau\|_{L^1(\partial {\cal O})}\big).
\end{equation}
\end{proposition}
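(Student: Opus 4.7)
The natural approach is a compactness (contradiction) argument, together with a use of the divergence theorem to upgrade the one-sided control on $v_\nu$ to full control.

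Assume the inequality fails. Then one produces a sequence $(v_n)\subset H^1(\mathcal{O};\R^3)$ with $\|v_n\|_{H^1(\mathcal{O})}=1$ and
$$\|e(v_n)\|_{L^2(\mathcal{O})}+\|((v_n)_\nu)^+\|_{L^1(\partial\mathcal{O})}+\|(v_n)_\tau\|_{L^1(\partial\mathcal{O})}\longrightarrow 0.$$
Writing $v_n=(v_n-r_{v_n})+r_{v_n}$ via the decomposition $H^1(\mathcal{O};\R^3)=W^1(\mathcal{O})\oplus\mathcal{R}$ and applying \eqref{Eq. 38} to $v_n$, one gets $\|v_n-r_{v_n}\|_{H^1(\mathcal{O})}\le C\|e(v_n)\|_{L^2(\mathcal{O})}\to 0$. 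Since $\mathcal{R}$ is finite-dimensional and the $r_{v_n}$'s are $H^1$-bounded, a subsequence $r_{v_n}\to r$ in $\mathcal{R}$, hence $v_n\to r$ strongly in $H^1(\mathcal{O};\R^3)$ with $\|r\|_{H^1(\mathcal{O})}=1$. The trace operator is continuous from $H^1(\mathcal{O};\R^3)$ into $L^2(\partial\mathcal{O};\R^3)\hookrightarrow L^1(\partial\mathcal{O};\R^3)$, so $(v_n)_\nu\to r_\nu$ and $(v_n)_\tau\to r_\tau$ in $L^1(\partial\mathcal{O})$. Taking limits gives $(r_\nu)^+=0$ and $r_\tau=0$ a.e.\ on $\partial\mathcal{O}$.

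The heart of the argument is then showing that a rigid motion $r=a+b\wedge x$ with $r_\nu\le 0$ and $r_\tau=0$ on $\partial\mathcal{O}$ must vanish identically. Since rigid motions are divergence-free ($\mathrm{div}(a+b\wedge x)=0$), the divergence theorem yields
$$\int_{\partial\mathcal{O}} r_\nu\, d\sigma=\int_{\mathcal{O}}\mathrm{div}(r)\,dx=0.$$
Together with $r_\nu\le 0$ a.e.\ this forces $r_\nu=0$ a.e.\ on $\partial\mathcal{O}$. Combined with the tangential condition, $r=0$ a.e.\ on $\partial\mathcal{O}$. As $r$ is an affine $\R^3$-valued map vanishing on the boundary of a bounded Lipschitz open set of $\R^3$ (which contains four affinely independent points), one concludes $r\equiv 0$ on $\mathcal{O}$, contradicting $\|r\|_{H^1(\mathcal{O})}=1$.

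The step I expect to be the main obstacle is precisely the rigidity argument of the last paragraph: extracting from the \emph{one-sided} pointwise condition $(r_\nu)^+=0$ a global vanishing of $r_\nu$. Without the divergence-theorem trick, one only sees that the rigid motion is inward-pointing and tangential-free, which is not obviously enough; the key observation is that divergence-freeness of affine maps of the form $a+b\wedge x$ turns an inequality into an equality when integrated over $\partial\mathcal{O}$. Once this is in hand, the remaining passage from $r_{|\partial\mathcal{O}}=0$ to $r\equiv 0$ is elementary since $r$ is affine. The compactness argument itself is routine given the standard Korn inequality \eqref{Eq. 38} and the continuity of the trace.
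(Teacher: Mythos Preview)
Your argument is correct. The compactness scheme is the standard one, and the crucial step---using that rigid motions are divergence-free so that $\int_{\partial\mathcal{O}} r_\nu\,d\sigma=0$, which upgrades the one-sided constraint $r_\nu\le 0$ to $r_\nu=0$---is exactly the right idea and is carried out cleanly. The final step (an affine map vanishing on $\partial\mathcal{O}$ vanishes identically) is fine since the boundary of a bounded Lipschitz domain in $\R^3$ cannot lie in an affine hyperplane.

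As for comparison with the paper: there is nothing to compare. The paper does not prove Proposition~\ref{newKorn}; it merely quotes it from \cite{cior3} (``We will use the following proposition from \cite{cior3}'') and then applies it. Your proposal therefore supplies what the paper omits, and does so by what is in fact the argument used in the cited reference: a contradiction argument reducing to the rigidity statement that a rigid displacement with $r_\tau=0$ and $r_\nu\le 0$ on $\partial\mathcal{O}$ must vanish, settled via the divergence theorem.
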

\noindent We denote $O^{j}$ the center of gravity of $Y^{j}$, $j=1,\ldots,m$.
\medskip

Let $u$ be in $H^{1}(\O_{\e}^{j} ; \R^3)$ and $r^{j}_u(\e\xi)$, $j=1,\ldots,m$,  the orthogonal projection of $u(\e \xi+\e y)_{|_{Y^{j}}}$ on ${\cal R}$. We write
$$r^{j}_u(\e\xi)(y)=a^{j}(\e\xi)+b^{j}(\e \xi)\land(y-O^{j}),\qquad \hbox{for every } y\in Y^{j}, \;\;\xi\in \Xi_\e.$$ We define the piecewise constant functions $a^{j}_u$ and $b^{j}_u$ by
\begin{equation}
\begin{aligned}
 a^{j}_u(x')&=a^{j}(\e\xi)\quad x'\in \e\xi+\e Y,\;\; \xi\in \Xi_\e,\\
a^{j}_u(x')&=0\qquad x'\in \Lambda_\e,\\
b^{j}_u(x')&=b^{j}(\e\xi)\quad x'\in \e\xi+\e Y,\;\; \xi\in \Xi_\e,\\
b^{j}_u(x')&=0\qquad x'\in \Lambda_\e. 
\end{aligned}
\end{equation}

These functions belongs to $L^\infty(\omega;\R^3)$ and the associated rigid body field, still denoted $r^{j}_u$, belongs to $L^\infty(\omega; {\cal R})$.
\medskip

As a consequence of the above Proposition \ref{newKorn} we get
\begin{proposition} \label{smallinclusions}
There exists a constant $C$ (independent of $\e$) such that for every $j= 1, \ldots, m$ and  for every $u$ in $H^{1}(\O_{\e}^{j} ; \R^3)$, 
\begin{equation}
 \label{smalldomain}
 \begin{aligned}
&\sum_{\xi\in \Xi_\e}\|u-r^{j}_u(\e\xi)\|^2_{L^{2}(\e\xi+\e Y^{j})}+ \e^2 \|\nabla (u-r^{j}_u(\e\xi))\|^2_{L^{2} (\e\xi+\e Y^{j}))} \leq  C \e^2 \|e(u)\|^2_{L^2(\O_{\e}^{j})} ,\\
&\|a^{j}_u\|_{L^1(\omega)}+\e \|b^{j}_u\|_{L^1(\omega)}\le C\sqrt \e \|e(u)\|_{L^2(\O_{\e}^{j})} + C \big(\|(u_{\nu})^{+}\|_{L^{1}(S^{j}_\e)} + \|u_{\tau}\|_{{L^{1}(S^{j}_\e)}}\big),\\
&\|u\|_{L^1(\O_{\e}^{j})}\le C  \e^{3/2} \|e(u)\|_{L^2(\O_{\e}^{j})} + C  \e \big(\|(u_{\nu})^{+}\|_{L^{1}(S^{j}_\e)} + \|u_{\tau}\|_{{L^{1}(S^{j}_\e)}}\big).
\end{aligned}
\end{equation}
\end{proposition}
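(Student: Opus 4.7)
The plan is to transfer the unit-cell Korn-type inequalities to the $\e$-scale by the change of variables $y=(x-\e\xi)/\e$ on each inclusion $\e\xi+\e Y^j$, and then sum cell by cell. Throughout, I identify $r^j_u(\e\xi)$ with its representative on $\e\xi+\e Y^j$ obtained by the affine substitution, so that $r^j_u(\e\xi)((x-\e\xi)/\e)=a^j(\e\xi)+b^j(\e\xi)\land\big((x-\e\xi)/\e-O^j\big)$. Setting $v(y)=u(\e\xi+\e y)$, the elementary scalings
\[
\|v-r^j_u(\e\xi)\|_{L^2(Y^j)}^2=\e^{-3}\|u-r^j_u(\e\xi)\|_{L^2(\e\xi+\e Y^j)}^2,\qquad \|e_y(v)\|_{L^2(Y^j)}^2=\e^{-1}\|e(u)\|_{L^2(\e\xi+\e Y^j)}^2
\]
and the analogous one for $\nabla_y(v-r^j_u(\e\xi))$ reduce everything to the fixed domain $Y^j$.

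For the first inequality I apply the Korn-Wirtinger inequality \eqref{Eq. 38} on the Lipschitz Korn-domain $Y^j$ to $v-r^j_u(\e\xi)\in W^1(Y^j)$, which is exactly what the orthogonal projection produces. Transporting the resulting bound $\|v-r^j_u(\e\xi)\|_{H^1(Y^j)}\le C\|e_y(v)\|_{L^2(Y^j)}$ back to $\e\xi+\e Y^j$ via the scalings above yields the single-cell version of \eqref{smalldomain}${}_1$; summing over $\xi\in\Xi_\e$ gives the stated global estimate, since the cells are disjoint and cover $\Omega_\e^j$.

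For the second inequality I apply Proposition \ref{newKorn} on $Y^j$ directly to $v$:
\[
\|v\|_{H^1(Y^j)}\le C\bigl(\|e_y(v)\|_{L^2(Y^j)}+\|(v_\nu)^+\|_{L^1(\partial Y^j)}+\|v_\tau\|_{L^1(\partial Y^j)}\bigr).
\]
Since $v-r^j_u(\e\xi)\in W^1(Y^j)$, $\|r^j_u(\e\xi)\|_{H^1(Y^j)}\le \|v\|_{H^1(Y^j)}+C\|e_y(v)\|_{L^2(Y^j)}$, and because $r^j_u(\e\xi)$ is a rigid motion on a fixed domain with $|Y^j|>0$, its $H^1(Y^j)$ norm is equivalent to $|a^j(\e\xi)|+|b^j(\e\xi)|$. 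After converting the surface integrals on $\partial Y^j$ to integrals on $\e\xi+\e S^j$ (which introduces a factor $\e^{-2}$) and the volume strain norm (factor $\e^{-1/2}$), I obtain
\[
|a^j(\e\xi)|+|b^j(\e\xi)|\le C\e^{-1/2}\|e(u)\|_{L^2(\e\xi+\e Y^j)}+C\e^{-2}\bigl(\|(u_\nu)^+\|_{L^1(\e\xi+\e S^j)}+\|u_\tau\|_{L^1(\e\xi+\e S^j)}\bigr).
\]
Multiplying by $\e^2$ and summing in $\xi$, using $\|a^j_u\|_{L^1(\omega)}=\sum_\xi\e^2|a^j(\e\xi)|$ and likewise for $b^j_u$, the boundary terms assemble into integrals over $S^j_\e$; the volume term is handled by Cauchy-Schwarz together with $\#\Xi_\e\le C\e^{-2}$, which yields exactly the $\sqrt\e\,\|e(u)\|_{L^2(\Omega_\e^j)}$ factor. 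Note this actually controls $\|a^j_u\|_{L^1(\omega)}+\|b^j_u\|_{L^1(\omega)}$, which is stronger than the claimed $\|a^j_u\|_{L^1(\omega)}+\e\|b^j_u\|_{L^1(\omega)}$; I will keep the stronger form for use in step three.

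For the third inequality I split $u=(u-r^j_u(\e\xi))+r^j_u(\e\xi)$ on each cell. On the first piece, Cauchy-Schwarz in the cell index (again using $\#\Xi_\e\le C\e^{-2}$) together with \eqref{smalldomain}${}_1$ gives $\|u-r^j_u\|_{L^1(\Omega_\e^j)}\le C\e^{3/2}\|e(u)\|_{L^2(\Omega_\e^j)}$. On the second piece, the bound $|r^j_u(\e\xi)(\cdot)|\le |a^j(\e\xi)|+C|b^j(\e\xi)|$ on $\e\xi+\e Y^j$ (since $Y^j\Subset Y$ is bounded) yields $\|r^j_u\|_{L^1(\Omega_\e^j)}\le C\e(\|a^j_u\|_{L^1(\omega)}+\|b^j_u\|_{L^1(\omega)})$, which when combined with the strong form of the second inequality produces the desired $C\e^{3/2}\|e(u)\|_{L^2(\Omega_\e^j)}+C\e(\|(u_\nu)^+\|_{L^1(S^j_\e)}+\|u_\tau\|_{L^1(S^j_\e)})$. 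The main technical subtlety is bookkeeping the scalings: one must carry both $a^j_u$ and $b^j_u$ separately through the cell-wise sum in step two, rather than combining them with the asymmetric weight $(1,\e)$ that appears in \eqref{smalldomain}${}_2$, otherwise step three loses the extra power of $\e$ needed to get $\e^{3/2}$ and $\e$ rather than $\e^{1/2}$ and $1$.
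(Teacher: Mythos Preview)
Your proof is correct and follows the same strategy as the paper: rescale each cell to $Y^j$, apply \eqref{Eq. 38} for the first estimate and Proposition~\ref{newKorn} for the second, transport back, and sum with Cauchy--Schwarz in the cell index. Your remark that one in fact controls $\|a^j_u\|_{L^1(\omega)}+\|b^j_u\|_{L^1(\omega)}$ (without the extra weight $\e$ on $b^j_u$), and that this stronger form is what is actually needed to deduce \eqref{smalldomain}$_3$, is a valid sharpening of the paper's intermediate step.
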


\begin{proof} Applying \eqref{Eq. 38} (after $\e$-scaling) gives
\begin{equation}
 \label{Eq. 312}
 \|u-r^{j}_u(\e\xi)\|^2_{L^{2}(\e\xi+\e Y^{j})}+ \e^2 \|\nabla (u-r^{j}_u(\e\xi))\|^2_{L^{2} (\e\xi+\e Y^{j})} \leq  C \e^2 \|e(u)\|^2_{L^2(\e\xi+\e Y^{j})}.
 \end{equation}
Then adding the above inequalities yields  \eqref{smalldomain}$_1$.

We have
$$\|r^{j}_u(\e\xi)\|^2_{L^2(\e\xi+\e Y^{j})}\le 2\big(\|u-r^{j}_u(\e\xi)\|^2_{L^2(\e\xi+\e Y^{j})}+\|u\|^2_{L^2(\e\xi+\e Y^{j})}\big)$$
which leads to
$$|a^{j}_u(\e\xi)|^2\e^3+|b^{j}_u(\e\xi)|^2\e^5  \le C\big(\|u-r^{j}_u(\e\xi)\|^2_{L^2(\e\xi+\e Y^{j})}+\|u\|^2_{L^2(\e\xi+\e Y^{j})}\big).$$
Taking into account \eqref{Eq. 312} and \eqref{nonlockeddomain} (after $\e$-scaling), we get
$$ |a^{j}_u(\e\xi)|\e^2+|b^{j}_u(\e\xi)|\e^3  \le C\e^{3/2}\|e(u)\|_{L^2(\e\xi+\e Y^{j})}+C \big(\|(u_{\nu})^{+}\|_{L^1(\e\xi+\e S^{j})} + \|u_\tau\|_{L^1(\e\xi+\e S^{j})}\big).$$
Adding these inequalities gives
$$\begin{aligned}
\|a^{j}_u\|_{L^1(\omega)}+\e \|b^{j}_u\|_{L^1(\omega)} & \le C \e^{3/2}\sum_{\xi\in \Xi_\e}\|e(u)\|_{L^2(\e\xi+\e Y^{j})}+C\big(\|(u_{\nu})^{+}\|_{L^1(S^{j}_\e)} + \|u_\tau\|_{L^1(S^{j}_\e)}\big)\\
& \le C \e^{3/2}\Big(\sum_{\xi\in \Xi_\e} 1^2\Big)^{1/2}\Big(\sum_{\xi\in \Xi_\e}\|e(u)\|^2_{L^2(\e\xi+\e Y^{j})}\Big)^{1/2}+C\big(\|(u_{\nu})^{+}\|_{L^1(S^{j}_\e)} + \|u_\tau\|_{L^1(S^{j}_\e)}\big)\\
& \le C \sqrt\e \|e(u)\|_{L^2(\O_{\e}^{j})}+C\big(\|(u_{\nu})^{+}\|_{L^1(S^{j}_\e)} + \|u_\tau\|_{L^1(S^{j}_\e)}\big).
\end{aligned}$$
Finally, estimate \eqref{smalldomain}$_3$ is an immediate consequence of \eqref{smalldomain}$_1$ and \eqref{smalldomain}$_2$.
\end{proof}
\begin{remark} Due to \eqref{Eq. 312} and \eqref{nonlockeddomain} (again after $\e$-scaling), we also obtain\begin{equation}\label{L2R}
\|u\|_{L^2(\O_{\e}^{j})}\le C \e \|e(u)\|_{L^2(\O_{\e}^{j})} + {C\over \sqrt  \e} \big(\|(u_{\nu})^{+}\|_{L^{1}(S^{j}_\e)} + \|u_{\tau}\|_{{L^{1}(S^{j}_\e)}}\big),\qquad j= 1, \ldots, m.
\end{equation}
\end{remark}

\subsection{Korn inequality for the perforated layer}
Now we want to derive the Korn inequality for the simply connected part of the layer. 

Denote
$$H^1_\Gamma(\O^*_\e)=\Big\{ \phi\in H^1(\O^*_\e)\;|\; \phi=0\;\hbox{a.e. on } \Gamma\Big\}.$$
\begin{proposition}
\label{prunk} There exists a constant $C$ independent of $\e$ such that for every $u$ in $H^1_\Gamma (\Omega^*_{\e} ; \R^3)$
\begin{equation}
\label{eq315}
\| u \|_{H^1 (\O_{\e}^* )} \leq C \| e(u) \|_{L^2 (\O_{\e}^*)}.
\end{equation}
We also have
\begin{equation}
\label{Eq 315-bis}
\begin{aligned}
\| \nabla u \|^2_{L^2(\O_{\e}^{a} )} +\e \| \nabla u \|^2_{L^2(\O_{\e}^0 )}+\| \nabla u \|^2_{L^2(\O^{b} )}\leq C\big( \| e(u) \|^2_{L^2 (\O_{\e}^{a})}+\e  \| e(u) \|^2_{L^2 (\O_{\e}^0)}+ \| e(u) \|^2_{L^2 (\O^{b})}\big),\\
\|u \|^2_{L^2(\O_{\e}^{a} )} +{1\over \e} \|u \|^2_{L^2(\O_{\e}^0 )}+\|u \|^2_{L^2(\O^{b} )}\leq C\big( \| e(u) \|^2_{L^2 (\O_{\e}^{a})}+\e  \| e(u) \|^2_{L^2 (\O_{\e}^0)}+ \| e(u) \|^2_{L^2 (\O^{b})}\big).
\end{aligned}
\end{equation}
\end{proposition}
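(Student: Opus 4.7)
The plan is to combine a classical Korn inequality on the Dirichlet block $\O^b$ with a cell-by-cell Korn--Wirtinger analysis on the matrix $\O_\e^0$ of the thin layer, then to transfer the resulting trace information to the upper block $\O_\e^a$. Since $u$ vanishes on $\Gamma\subset\partial\O^b$ and $\O^b$ is a fixed bounded Lipschitz domain independent of $\e$, the classical Korn inequality with partial Dirichlet data yields $\|u\|_{H^1(\O^b)}\le C\|e(u)\|_{L^2(\O^b)}$, and the trace inequality \eqref{TR} then gives $\|u\|_{L^2(\Sigma)}\le C\|e(u)\|_{L^2(\O^b)}$, with $C$ independent of $\e$.

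For each cell $\e\xi+\e Y^0$, $\xi\in\Xi_\e$, I would scale to the unit reference and apply the Korn--Wirtinger inequality \eqref{Eq. 38} to $Y^0$. This produces a rigid motion $r_\xi(x)=a_\xi+b_\xi\wedge((x-\e\xi)/\e)$ satisfying
$$\|u-r_\xi\|^2_{L^2(\e\xi+\e Y^0)}+\e^2\|\nabla(u-r_\xi)\|^2_{L^2(\e\xi+\e Y^0)}\le C\e^2\|e(u)\|^2_{L^2(\e\xi+\e Y^0)}.$$
Coupling this with the (scaled) trace inequality on the bottom face $S_Y^b\subset\partial Y^0$, and using that $r\mapsto\|r\|_{L^2(S_Y^b)}$ is a genuine norm on the finite-dimensional space $\mathcal R$, gives the cellwise control
$$|a_\xi|^2+|b_\xi|^2\le\frac{C}{\e^2}\|u\|^2_{L^2(\Sigma_\xi)}+\frac{C}{\e}\|e(u)\|^2_{L^2(\e\xi+\e Y^0)},\qquad\Sigma_\xi=\e\xi+\e S_Y^b.$$
Since $\|r_\xi\|^2_{L^2(\e\xi+\e Y^0)}\sim\e^3(|a_\xi|^2+|b_\xi|^2)$ and $\|\nabla r_\xi\|^2_{L^2(\e\xi+\e Y^0)}\sim\e|b_\xi|^2$, summing over $\xi$ produces
$$\|u\|^2_{L^2(\O_\e^0)}\le C\e^2\|e(u)\|^2_{L^2(\O_\e^0)}+C\e\|u\|^2_{L^2(\Sigma)},\qquad \|\nabla u\|^2_{L^2(\O_\e^0)}\le C\|e(u)\|^2_{L^2(\O_\e^0)}+\frac{C}{\e}\|u\|^2_{L^2(\Sigma)}.$$

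For the upper block, I would run the same cellwise triangle/trace argument on the top face to obtain $\|u\|^2_{L^2(S_\e^a)}\le C\|u\|^2_{L^2(\Sigma)}+C\e\|e(u)\|^2_{L^2(\O_\e^0)}$, then apply Korn--Wirtinger on $\O_\e^a$ (a uniformly Lipschitz Korn--domain whose constant does not degenerate as $\e\to 0$) and control the associated rigid motion by its trace on $S_\e^a$ via the norm equivalence on $\mathcal R$, giving $\|u\|^2_{H^1(\O_\e^a)}\le C(\|e(u)\|^2_{L^2(\O_\e^a)}+\|u\|^2_{L^2(S_\e^a)})$. Substituting $\|u\|^2_{L^2(\Sigma)}\le C\|e(u)\|^2_{L^2(\O^b)}$, then multiplying the gradient estimate on $\O_\e^0$ by $\e$ and dividing its $L^2$ estimate by $\e$, absorbs the singular $\e^{-1}$ factor into the $\e$-weighted right-hand side and delivers \eqref{Eq 315-bis}. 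Inequality \eqref{eq315} follows by collecting the three blocks, using that the $L^2$ contribution on $\O_\e^0$ carries an extra $\e$ and is hence immediately controlled.

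The principal obstacle will be the uniform handling of the rotational part $b_\xi$: the $L^2$ trace of a rotation on a face of size $\e$ is only of order $\e|b_\xi|$, while its full gradient in the physical cell is of order $|b_\xi|/\e$, so a factor $\e^{-1}$ appears naturally in the cellwise gradient bound. This mismatch is exactly what forces the weight $\e$ to appear on both sides of \eqref{Eq 315-bis}, and it reflects the natural energy scaling of a soft layer whose stiffness is of order $\e$.
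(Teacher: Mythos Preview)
Your approach correctly yields \eqref{Eq 315-bis}, and for that part it is a legitimate alternative to the paper's argument. The genuine difficulty is \eqref{eq315}. Your own gradient bound in the layer reads
\[
\|\nabla u\|^2_{L^2(\O_\e^0)}\le C\|e(u)\|^2_{L^2(\O_\e^0)}+\frac{C}{\e}\,\|u\|^2_{L^2(\Sigma)}\le C\|e(u)\|^2_{L^2(\O_\e^0)}+\frac{C}{\e}\,\|e(u)\|^2_{L^2(\O^b)},
\]
and the factor $\e^{-1}$ does \emph{not} disappear when you sum the three blocks; it is only absorbed after you multiply by $\e$, which is precisely what gives \eqref{Eq 315-bis} but not \eqref{eq315}. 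The loss is structural: a rotation $b_\xi$ of the unit cell, transported to the physical cell of size $\e$, contributes $|b_\xi|/\e$ to the gradient, while its $L^2$ trace on the bottom face controls only $|b_\xi|$. Treating the cells independently and anchoring each rigid motion by its bottom trace therefore cannot deliver a uniform full--gradient bound in the layer. Your closing sentence (``the $L^2$ contribution on $\O_\e^0$ carries an extra $\e$'') is right for the $L^2$ norm but overlooks that the gradient carries a \emph{missing} $\e$.

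The paper circumvents this by first building a strain--preserving extension $\widetilde u\in H^1(\O;\R^3)$: on each cell it restricts $u$ to the crack--free collar $Y_\eta=\{y\in Y:\hbox{dist}(y,\partial Y)<\eta/2\}$, subtracts the rigid projection, extends to $Y$, and adds the rigid motion back, so that $\|e(\widetilde u)\|_{L^2(\O)}\le C\|e(u)\|_{L^2(\O^*_\e)}$. Since $\widetilde u$ now lives on the \emph{fixed} domain $\O$ and vanishes on $\Gamma$, the classical Korn inequality gives $\|\widetilde u\|_{H^1(\O)}\le C\|e(u)\|_{L^2(\O^*_\e)}$ with $C$ independent of $\e$---this is where the global coupling between cells (lost in your cellwise argument) is recovered. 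The difference $u-\widetilde u$ vanishes on each $\partial Y$ and is handled by Korn with zero boundary data on $Y^0$, giving $\|\nabla(u-\widetilde u)\|_{L^2(\O^0_\e)}\le C\|e(u)\|_{L^2(\O^0_\e)}$. Adding the two pieces yields \eqref{eq315}. A secondary point: your cellwise sums cover only $\widehat\O^0_\e$, not the boundary strip $\Lambda^M_\e$; the paper's extension handles that region automatically since $\widetilde u=u$ there and the global Korn on $\O$ applies.
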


\begin{proof}
\noindent{\it Step 1. } First, we construct an ''extension'' of  $u$. Set
$$Y_{\eta}\doteq\big  \{y\in Y\;| \;  \hbox{dist}(y, \partial Y )<\eta/2\big\}. $$
The domain $Y_{\eta}$ is a bounded domain with a Lipschitz boundary. Therefore  there is an extension operator $P_\eta$ from $H^{1}(Y_{\eta})$ into $H^{1}(Y)$ and a constant $C$ (which depends on $\eta$) such that (see  \cite{CiDaGr2})
\begin{equation}\label{eq.311}
\forall v\in H^1(Y_{\eta}) \qquad  \|P_\eta(v) \|_{L^{2}(Y)}\leq C  \|v \|_{L^{2}(Y_{\eta})} \text{ \quad and \quad}  \|\nabla_y P_\eta(v) \|_{L^{2}(Y)}\leq C  \|\nabla_y v \|_{L^{2}(Y_{\eta})}.
\end{equation} 
Let $w\in [H^1(Y_{\eta})]^3$ and $r_w$  the projection of $w$ on $\cal R$, we have
\begin{equation} \label{eq.312}
\|w-r_{w}\|_{H^{1}(Y_{\eta})}\leq C \|e_y(w)\|_{L^{2}(Y_{\eta})}.
\end{equation}
The constant depends on $\eta$.
\medskip

Now for every  $w\in [H^1(Y_{\eta})]^3$ we define the extension  $Q_\eta(w)\doteq P_\eta(w-r_{w}) + r_{w}$ of $w$. From \eqref{eq.311} and \eqref{eq.312} we get
\begin{equation} \label{4.4}
Q_\eta(w)\in [H^1(Y)]^3,\qquad  \|e_y(Q_\eta(w)) \|_{L^{2}(Y)}\leq C \|e_y(w)\|_{L^{2}(Y_{\eta})}.
\end{equation}
Applying the above result to the restriction of the displacement $y\longmapsto u(\e\xi+\e y)$ to the cell $Y_\eta$, $\xi\in \Xi_\e$,  allows to define an extension $\widetilde{u}$ of $u$ in the layer $\widehat{\Omega}^M_\e$. Estimate \eqref{4.4} leads to
\begin{equation*}
\widetilde{u}\in H^1(\widehat{\O}^M_\e; \R^3),\qquad \|e(\widetilde{u}) \|^2_{L^{2}(\widehat{\O}^M_\e)}\leq C\sum_{\xi\in \Xi_\e} \|e(u)\|^2_{L^{2}(\e(\xi+Y_{\eta}))}\le C \|e(u)\|^2_{L^{2}(\O^0_\e)}.
\end{equation*}
The constants do not depend on $\e$.

We set $\widetilde{u}=u$ in $\O\setminus \overline{\widehat{\O}^M_\e}$.
The displacement $\widetilde{u}$ belongs to $H^1(\O; \R^3)$, it vanishes on $\Gamma$ and it satisfies
\begin{equation}
\label{eq.315}
 \|e(\widetilde{u}) \|_{L^{2}(\O^M_\e)}\leq  C \|e(u)\|_{L^{2}(\O^0_\e)},\qquad  \|e(\widetilde{u}) \|_{L^{2}(\O)}\leq  C \|e(u)\|_{L^{2}(\O^*_\e)}.
\end{equation}
The constant do not depend on $\e$.

\noindent{\it Step 2.} From the Korn's inequality, the hypothesis that the measure of $\Gamma$ is positive  and \eqref{eq.315}$_2$ we obtain
\begin{equation}
\label{kineq}
\| \widetilde{u} \|_{H^1(\O)} \leq C \| e(\widetilde{u}) \|_{L^2(\O)}\leq C \| e(u) \|_{L^2(\O^*_\e)}.
\end{equation}

\noindent{\it Step 3.} We prove \eqref{eq315}. 
Since by construction the domains $Y^0$  is a Korn-domain, there exists a constant $C>0$  such that for every $v\in [H^1(Y^0)]^3$ equal to zero on  $\partial Y$
\begin{equation*}
\| v \|_{H^1 (Y^0)} \leq C  \| e_y(v) \|_{L^2 (Y^0)}.
\end{equation*}
Applying the above result to the restriction of the displacement $y\longmapsto (u-\widetilde{u})(\e\xi+\e y)$ to the cell $Y^0$, $\xi\in \Xi_\e$, gives
$$\|(u-\tilde u)\|^2_{L^2(\widehat{\O}^M_\e)}\le C\e^2\sum_{\xi\in \Xi_\e}\|e(u-\tilde u)\|^2_{L^{2}(\e(\xi+Y^0))},\qquad \|\nabla (u-\tilde u)\|^2_{L^2(\widehat{\O}^M_\e)}\le C\sum_{\xi\in \Xi_\e}\|e(u-\tilde u)\|^2_{L^{2}(\e(\xi+Y^0))}.$$ The constants do not depend on $\e$. Hence, using the fact that $u-\widetilde{u}$  vanishes in $\Omega^0_\e \setminus \overline{\widehat{\O}^M_\e}$ and due to estimate \eqref{kineq}, we obtain
 \begin{equation}\label{EQ. 322-0}
 \|(u-\tilde u)\|_{L^2(\O^0_\e)}\le C\e\|e(u)\|_{L^{2}(\O^0_\e)},\qquad \|\nabla (u-\tilde u)\|_{L^2(\O^0_\e)}\le C\|e(u)\|_{L^{2}(\O^0_\e)}.
 \end{equation}
 Combining the above inequalities and \eqref{kineq} gives \eqref{eq315}.
\smallskip
 
 \noindent{\it Step 4.} We prove \eqref{Eq 315-bis}. The Korn inequality and the trace theorem give
 \begin{equation}\label{EQ. 322}
 \begin{aligned}
&\|\tilde u\|_{L^2(\O^{b})}+\|\tilde{u}\|_{L^2(\Sigma)}+\|\nabla \tilde u\|_{L^2(\O^{b})}\le C\|e(\tilde u)\|_{L^2(\O^{b})},\\
&\|\tilde u\|_{L^2(\O^{a}_\e)}+\|\nabla \tilde u\|_{L^2(\O^{a}_\e)}\le C\|e(\tilde u)\|_{L^2(\O^{a}_\e)}+C\|\tilde{u}\|_{L^2(S^{a}_\e)}.
\end{aligned}
\end{equation}
Besides we have
\begin{equation}\label{EQ. 323}
\begin{aligned}
&\|\tilde u\|^2_{L^2(S^{a}_\e)}\le C\big(\|\tilde{u}\|^2_{L^2(\Sigma)}+\e \|\nabla \tilde u\|^2_{L^2(\O^M_\e)}\big),\\
&\|\tilde u\|^2_{L^2(\O^M_\e)}\le C\big(\e \|\tilde{u}\|^2_{L^2(\Sigma)}+\e^2 \|\nabla \tilde u\|^2_{L^2(\O^M_\e)}\big).
\end{aligned}
\end{equation} Taking into account the above estimates \eqref{EQ. 322}-\eqref{EQ. 323} together with \eqref{kineq}-\eqref{EQ. 322-0} we obtain \eqref{Eq 315-bis}.
\end{proof}
We set
\begin{equation}\label{Ve}
{\V}_\e\doteq \Big \{ {\bf v}=(v, v^1 \ldots, v^{m})\;\big|\; {\bf v}\in H^1_\Gamma(\O_{\e}^* ; \R^3 ) \times H^{1}(\O^{1}_{\e} ; \R^3 )\times\ldots\times H^{1}(\O^{m}_{\e} ; \R^3 )\Big\}.
\end{equation} We will denote by $[{\bf v}]_{S^{j}_{\e}}$ the jump of the vector field across the surface $S^{j}_{\e}$, $j=0,\ldots, m$. 

\noindent More precisely, for $j=1,\ldots,m$ we set   $[{\bf v}]_{S^{j}_{\e}}= (v^{j}- v)_{|S^{j}_{\e}}, \;\; [{\bf v}_\nu]_{S^{j}_{\e}}= (v^{j}- v)\cdot\nu_{|S^{j}_{\e}}, \;\; [{\bf v}_\tau]_{S^{j}_{\e}}= [{\bf v}]_{S^{j}_{\e}}- [{\bf v}_\nu]_{S^{j}_{\e}}$ and we define $[{\bf v}]_{S^{0}_{\e}}$, $[{\bf v}_\nu]_{S^{0}_{\e}}$, and $[{\bf v}_\tau]_{S^{0}_{\e}}$ by
\begin{gather}\label{JS0}
[{\bf v}]_{S^{0}_{\e}}(x^{'})=\lim_{t\to 0, t>0} v\big(x^{'}+t\nu(x^{'})\big)-v\big(x^{'}-t\nu(x^{'})\big),\\
[{\bf v}_{\nu}]_{S^{0}_{\e}} (x^{'})= \lim_{t\to 0, t>0} \Big(v\big(x^{'}+t\nu(x^{'})\big)-v\big(x^{'}-t\nu(x^{'})\big)\Big) \cdot \nu(x^{'}),\qquad \hbox{for a.e. } x^{'}\in S^{0}_{\e}\\
[{\bf v}_\tau]_{S^0_{\e}}= [{\bf v}]_{S^0_{\e}}- [{\bf v}_\nu]_{S^0_{\e}}.
\end{gather}
We equip ${\V}_\e$ with the following norm$^{(}$\footnotemark$^{)}$:
\footnotetext{ {\it Here we consider the case where the layer and the inclusions are made of a soft material.
}}
\begin{equation*}
\forall {\bf v}\in {\V}_\e,\qquad ||{\bf v}||_{\V_\e} \doteq\sqrt{\|\nabla v\|^2_{L^2(\O^{b};\R^{3\times 3} )}+\|\nabla v\|^2_{L^2(\O^{a}_\e;\R^{3\times 3}  )}+\e\|\nabla v\|^2_{L^2(\O_{\e}^0;\R^{3\times 3} )}+\sum_{j=1}^m\e\|v^{j}\|^2_{H^1(\O_{\e}^{j} ;\R^{3} )}}.
\end{equation*}
With the above norm, ${\V}_\e$ is a Hilbert space.
\smallskip

In order to measure the elements of $\V_\e$, we defined the following two maps:
\begin{equation*}
\begin{aligned}
\forall {\bf v}\in {\V}_\e,\qquad {\bf M}({\bf v}) \doteq&\sqrt{\|e(v)\|^2_{L^2(\O^{b} )}+\|e(v)\|^2_{L^2(\O^{a}_\e )}+\e\|e(v)\|^2_{L^2(\O_{\e}^0)}+\sum_{j=1}^m\e\|e(v^{j})\|^2_{L^2(\O_{\e}^{j} )}}\\
&\;\;\;+\sum_{j=1}^m \big(\|[{\bf v}_\nu]^{+}_{S^{j}_{\e}}\|_{L^1(S^{j}_\e)} + \|[{\bf v}_\tau]_{S^{j}_{\e}}\|_{L^1(S^{j}_\e)}\big).
\end{aligned}
\end{equation*}
Observe that ${\bf M}(\cdot)$ is not a norm nor a semi-norm.
Denote also
 $$\begin{aligned}
 \forall {\bf v}\in {\V}_\e,\qquad {\bf E}({\bf v}) \doteq& \|e(v)\|^2_{L^2(\O^{b} )}+\|e(v)\|^2_{L^2(\O^{a}_\e )}+\e\|e(v)\|^2_{L^2(\O_{\e}^0)}+\sum_{j=1}^m\e\|e(v^{j})\|^2_{L^2(\O_{\e}^{j} )}\\
 &\;\;\;+\sum_{j=1}^m \big(\|[{\bf v}_\nu]^{+}_{S^{j}_{\e}}\|_{L^1(S^{j}_\e)} + \|[{\bf v}_\tau]_{S^{j}_{\e}}\|_{L^1(S^{j}_\e)}\big).
 \end{aligned}$$
 The quantity ${\bf E}({\bf v})$ is a kind of energy, it can replace the total elastic energy of the system. Observe that
 $$\forall {\bf v}\in {\V}_\e,\qquad {\bf M}({\bf v})\leq \sqrt{{\bf E}({\bf v}) }+ {\bf E}({\bf v}) \qquad {\bf E}({\bf v}) \leq {\bf M}({\bf v})^2+{\bf M}({\bf v}).$$ If we consider elements satisfying
 $${\bf M}({\bf v})\leq C_0$$ then 
 \begin{equation}
 \label{norm-en}
  \sqrt{{\bf E}({\bf v}) }\leq (1+C_0){\bf M}({\bf v}), \qquad {\bf M}({\bf v})\leq (1+C_0)^2\sqrt{{\bf E}({\bf v}) }.
 \end{equation}
 In the same way, if we have
 $${\bf E}({\bf v})\leq C^2_1$$ then 
 $$ {\bf M}({\bf v})\leq (1+C_1)\sqrt{{\bf E}({\bf v}) },\qquad \sqrt{{\bf E}({\bf v}) }\leq (1+C_1)^2{\bf M}({\bf v}).$$

Below we summarize the estimates for ${\bf v}\in {\V}_\e$.
\begin{proposition}
\label{prk}
There exists a constant $C$ independent of $\e$ such that for all ${\bf v}\in {\V}_\e$
\begin{equation}
\begin{aligned}
\label{eqk}
\| \nabla v \|^2_{L^2(\O_{\e}^{a} )} +\e \| \nabla v \|^2_{L^2(\O_{\e}^0 )}+\| \nabla v \|^2_{L^2(\O^{b} )}& \leq C[{\bf M}({\bf v})]^2,\\
\|v \|^2_{L^2(\O_{\e}^{a} )} +{1\over \e} \|v \|^2_{L^2(\O_{\e}^0 )}+\|v \|^2_{L^2(\O^{b} )}& \leq C[{\bf M}({\bf v})]^2,\\
\sum_{\xi\in \Xi_\e}\|v^{j}-r^{j}_{\bf v}(\e\xi)\|^2_{L^{2}(\e\xi+\e Y^{j})}+ \e^2 \|\nabla (v^{j}-r^{j}_{\bf v}(\e\xi))\|^2_{L^{2} (\e\xi+\e Y^{j}))}& \leq  C \e[{\bf M}({\bf v})]^2,\\
\sum_{j=1}^m\big(\|a^{j}_{\bf v}\|_{L^1(\omega)}+\e \|b^{j}_{\bf v}\|_{L^1(\omega)}\big)\le C  {\bf M}({\bf v}),\qquad \sum_{j=1}^m\|v^{j}\|_{L^1(\O_{\e}^{j})}& \le C  \e {\bf M}({\bf v}).
\end{aligned}
\end{equation}
The constants do not depend on $\e$.
\end{proposition}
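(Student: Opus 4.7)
The plan is to assemble the four lines of \eqref{eqk} from the two Korn-type results already at hand: Proposition \ref{prunk} applied to the matrix displacement $v$, and Proposition \ref{smallinclusions} applied separately to each inclusion displacement $v^j$. The only delicate point is that \eqref{smalldomain}$_2$ and \eqref{smalldomain}$_3$ involve $(v^j_\nu)^+$ and $v^j_\tau$ on $S^j_\e$, whereas ${\bf M}({\bf v})$ only controls the jumps across $S^j_\e$; I would bridge this gap by writing $v^j|_{S^j_\e}=v|_{S^j_\e}+[{\bf v}]_{S^j_\e}$ and estimating $\|v\|_{L^1(S^j_\e)}$ separately via unfolding.

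Lines one and two of \eqref{eqk} are immediate consequences of \eqref{Eq 315-bis} applied to $v\in H^1_\Gamma(\O^*_\e)$, since the right-hand side there equals $\|e(v)\|^2_{L^2(\O^b)}+\|e(v)\|^2_{L^2(\O^a_\e)}+\e\|e(v)\|^2_{L^2(\O^0_\e)}$, which is obviously dominated by $[{\bf M}({\bf v})]^2$. Line three follows from \eqref{smalldomain}$_1$ applied to $u=v^j$ and summed over $j$, after rewriting the right-hand side as $\e^2\|e(v^j)\|^2_{L^2(\O_\e^j)}=\e\bigl(\e\|e(v^j)\|^2_{L^2(\O_\e^j)}\bigr)\le\e[{\bf M}({\bf v})]^2$.

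For line four I would apply \eqref{smalldomain}$_2$ and \eqref{smalldomain}$_3$ to $u=v^j$, using $\sqrt\e\|e(v^j)\|_{L^2(\O_\e^j)}\le{\bf M}({\bf v})$. On $S^j_\e$ I would decompose $v^j=v+[{\bf v}]_{S^j_\e}$ and apply the elementary bound $(a+b)^+\le|a|+b^+$ to obtain
\[
\|(v^j_\nu)^+\|_{L^1(S^j_\e)}+\|v^j_\tau\|_{L^1(S^j_\e)}\le C\|v\|_{L^1(S^j_\e)}+\|[{\bf v}_\nu]^+_{S^j_\e}\|_{L^1(S^j_\e)}+\|[{\bf v}_\tau]_{S^j_\e}\|_{L^1(S^j_\e)}.
\]
The last two terms are direct components of ${\bf M}({\bf v})$, so everything reduces to producing an $\e$-uniform estimate for $\|v\|_{L^1(S^j_\e)}$.

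The main obstacle is precisely this trace estimate. My plan is to pass to the unfolded trace $\mathcal T^{bl,j}_\e(v)$ via \eqref{4.1iij}, apply the standard (fixed) trace inequality from $Y^0$ to $S^j\subset\partial Y^0$ pointwise in $x'\in\omega$, and convert back using the scaling identities of Proposition \ref{unfpr}(2)--(3), arriving at
\[
\|v\|^2_{L^2(S^j_\e)}\le C\bigl(\e^{-1}\|v\|^2_{L^2(\O^0_\e)}+\e\|\nabla v\|^2_{L^2(\O^0_\e)}\bigr).
\]
By lines one and two of \eqref{eqk} just proved, both terms on the right are of order $[{\bf M}({\bf v})]^2$; since $|S^j_\e|=O(1)$ uniformly in $\e$, a Cauchy--Schwarz step upgrades this to $\|v\|_{L^1(S^j_\e)}\le C{\bf M}({\bf v})$. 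Substituting back delivers the $a^j_{\bf v}$, $b^j_{\bf v}$ bound, and the $\|v^j\|_{L^1(\O_\e^j)}$ bound follows identically from \eqref{smalldomain}$_3$ with its extra prefactor of $\e$. I expect the only delicate bookkeeping to be in tracking the powers of $\e$ through the unfolding scaling, where a single misplaced factor of $\e$ would spoil the uniformity of the constants.
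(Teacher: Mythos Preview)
Your proposal is correct and follows essentially the same route as the paper. The only cosmetic difference is in the trace step: the paper obtains $\|v\|_{L^2(S^j_\e)}\le C\e^{-1/2}\big(\e\|e(v)\|_{L^2(\O^0_\e)}+\|v\|_{L^2(\O^0_\e)}\big)$ by applying \eqref{TR} directly after an $\e$-rescaling cell by cell, whereas you reach the equivalent bound (with $\nabla v$ in place of $e(v)$) via the unfolding identities; since both $\e\|\nabla v\|^2_{L^2(\O^0_\e)}$ and $\e\|e(v)\|^2_{L^2(\O^0_\e)}$ are already controlled by $[{\bf M}({\bf v})]^2$ from lines one and two, the two arguments coincide.
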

\begin{proof} Estimates \eqref{eqk}$_1$, \eqref{eqk}$_2$ and \eqref{eqk}$_3$  are the immediate consequences of \eqref{Eq 315-bis}, \eqref{smalldomain}$_1$. Then from \eqref{TR} (after $\e$-scaling) we get
$$\sum_{j=1}^m\|v\|_{L^2(S^{j}_\e)}\le {C\over \sqrt \e}\big(\e\|e(v)\|_{L^2(\O^0_\e)}+\|v\|_{L^2(\O^0_\e)}\big).$$ Hence
$$\sum_{j=1}^m \big(\|[{\bf v}_\nu]^{+}_{S^{j}_{\e}}\|_{L^1(S^{j}_\e)} + \|[{\bf v}_\tau]\|_{L^1(S^{j}_\e)}\big) \le C  {\bf M}({\bf v}).$$
The last two estimates in \eqref{eqk} follow from the one above  and \eqref{smalldomain}$_2$.
\end{proof} 
\begin{remark} From \eqref{L2R}  we also obtain
\begin{equation*}
\|v^{j}\|_{L^2(\O_{\e}^{j})}\le {C\over \sqrt \e}  {\bf M}({\bf v}),\qquad j= 1, \ldots, m.
\end{equation*} Hence
\begin{equation}\label{L2R-Final}
\|{\bf v}\|_{\V_\e}\le C{\bf M}({\bf v}).
\end{equation} 
The constants do not depend on $\e$.
\end{remark}

\section{Convergence results}
Every $v\in H^1(\Omega^{a}_\e;\R^3)$ is extended by reflexion in a displacement belonging to $H^1(\omega\times (\e, 2L-\e);\R^3)$. Denote
\begin{equation}
\begin{aligned}
H^1_{per}(Y^0 )=\big\{\phi\in H^1(Y^0 )\;\; |\;\; &\phi(0,y_2,y_3)=\phi(1,y_2,y_3)\enskip \hbox{for a.e. } (y_2,y_3)\in (0,1)^2,\\
&\phi(y_1,0,y_3)=\phi(y_1, 1, y_3)\enskip \hbox{for a.e. } (y_1,y_3)\in (0,1)^2 \big\},\\
&H^1_\Gamma(\O^{b})=\Big\{ \phi\in H^1(\O^{b})\;|\; \phi=0\;\hbox{a.e. on } \Gamma\Big\}.
\end{aligned}
\end{equation}
Before giving the convergence results, we prove the following  lemma of homogenization: 
\begin{lemma}\label{lem4.4.1}
Let $\{\phi_\e\}_\e$ be a sequence in  $H^1(\Omega)$ satisfying
\begin{equation*}
\| \phi_\e \|^2_{H^1(\O_{\e}^{a} )} +\| \phi_\e \|^2_{H^1(\O^{b} )}+\e \| \nabla \phi_\e \|^2_{L^2(\O_{\e}^M )} +{1\over \e}\|\phi_\e\|^2_{L^2(\Omega_\e^M)}\leq C\end{equation*} where the constant $C$ does not depend on $\e$. There exist a subsequence -still denoted $\e$- and $\phi^{b}\in H^1(\Omega^{b})$,  $\phi^{a}\in H^1(\Omega^{a})$,  $\widehat{\phi}\in L^2(\omega ; H^1_{per}(Y))$ such that
\begin{equation}\label{Eq4.2}
\begin{aligned}
\phi_\e & \rightharpoonup \phi^{b}\quad \hbox{weakly in } H^1(\O^{b}),\\
\phi_\e(\cdot +\e {\bf e}_3) & \rightharpoonup \phi^{a}\quad \hbox{weakly in } H^1(\O^{a}),\\
{\cal T}_\e(\phi_\e) & \rightharpoonup \widehat{\phi}\quad \hbox{weakly in } L^2(\omega; H^1(Y)).\end{aligned}
\end{equation}
Moreover, we have
\begin{equation}\label{CondInterface-0}
\phi^{b}(x', 0)=\widehat{\phi}(x', y_1,y_2,0),\qquad \phi^{a}(x', 0)=\widehat{\phi}(x', y_1,y_2,1)\quad \hbox{for a.e. } (x',y_1,y_2)\in \omega\times Y^{'}.
\end{equation}
\end{lemma}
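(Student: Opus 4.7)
The plan is to prove each of the three weak convergences in \eqref{Eq4.2} separately and then establish the two interface identities \eqref{CondInterface-0}. The first convergence is immediate: the $\e$-independent bound $\|\phi_\e\|_{H^1(\Omega^b)}\le C$ on the fixed domain $\Omega^b$ yields, via Banach--Alaoglu, a subsequence with $\phi_\e\rightharpoonup\phi^b$ in $H^1(\Omega^b)$. For the second convergence, I apply the reflection extension mentioned at the start of Section~4 to lift $\phi_\e$ from $\Omega_\e^a$ to $H^1(\omega\times(\e,2L-\e);\R^3)$ with norm controlled by $\|\phi_\e\|_{H^1(\Omega_\e^a)}$; the translate $\phi_\e(\cdot+\e{\bf e}_3)$ is then defined on $\Omega^a=\omega\times(0,L)$ (for $\e<L/2$) and uniformly bounded in $H^1(\Omega^a)$, so a further subsequence converges weakly to some $\phi^a$.

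For the unfolded limit I use properties 2 and 3 of Proposition \ref{unfpr}:
$$\|\mathcal{T}_\e(\phi_\e)\|_{L^2(\omega\times Y)}^2=\tfrac{1}{\e}\|\phi_\e\|_{L^2(\widehat\Omega_\e^M)}^2\le C,\qquad \|\nabla_y\mathcal{T}_\e(\phi_\e)\|_{L^2(\omega\times Y)}^2=\e\,\|\nabla\phi_\e\|_{L^2(\widehat\Omega_\e^M)}^2\le C.$$
Hence $\mathcal{T}_\e(\phi_\e)$ is bounded in $L^2(\omega;H^1(Y))$ and a subsequence converges weakly to some $\widehat\phi$. The $(y_1,y_2)$-periodicity of $\widehat\phi$ comes from a standard continuity-across-cells argument: since $\phi_\e$ is single-valued across interior cell faces, for $\xi',\,\xi'+{\bf e}_1\in\Xi_\e$ and $x'\in\e\xi'+\e Y'$ one has
$$\mathcal{T}_\e(\phi_\e)(x',1,y_2,y_3)=\mathcal{T}_\e(\phi_\e)(x'+\e{\bf e}_1,0,y_2,y_3),$$
and pairing with $\psi\in C_c^\infty(\omega\times(0,1)^2)$ and using strong $L^2$-convergence of $\psi(\cdot-\e{\bf e}_1,\cdot,\cdot)$ to $\psi$ together with weak convergence of the traces of $\mathcal{T}_\e(\phi_\e)$ at $y_1=0,1$ yields $\widehat\phi(x',0,y_2,y_3)=\widehat\phi(x',1,y_2,y_3)$. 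The analogous identity in $y_2$ places $\widehat\phi$ in $L^2(\omega;H^1_{per}(Y))$.

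For the interface conditions \eqref{CondInterface-0}, the key observation is that on $y_3=0$,
$$\mathcal{T}_\e(\phi_\e)(x',y_1,y_2,0)=\phi_\e\bigl(\e[\tfrac{x'}{\e}]_{Y'}+\e(y_1,y_2),\,0\bigr)$$
is exactly the two-dimensional boundary unfolding of the trace of $\phi_\e^b$ on $\Sigma$. Since $\phi_\e\rightharpoonup\phi^b$ in $H^1(\Omega^b)$, this trace converges weakly in $H^{1/2}(\Sigma)$ to $\phi^b(\cdot,0)$, and the 2D boundary unfolding of a weakly convergent trace converges weakly in $L^2(\omega\times Y')$ to the same function viewed as independent of $(y_1,y_2)$. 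Matching this with the trace of $\widehat\phi$ at $y_3=0$ identifies $\widehat\phi(x',y_1,y_2,0)$ with $\phi^b(x',0)$. The identity at $y_3=1$ is analogous, using that $\mathcal{T}_\e(\phi_\e)(x',y_1,y_2,1)=\phi_\e(\e[\tfrac{x'}{\e}]_{Y'}+\e(y_1,y_2),\e)$ is the 2D unfolding of the trace on $\Sigma$ of the shifted sequence $\phi_\e(\cdot+\e{\bf e}_3)$, which converges weakly in $H^{1/2}(\Sigma)$ to $\phi^a(\cdot,0)$.

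The main obstacle is this interface identification: one must consistently pair the weak $H^{1/2}$-convergence of boundary traces with the weak $L^2(\omega;H^1(Y))$-convergence of the 3D unfolded sequence and identify the traces of $\widehat\phi$ on the horizontal faces of $Y$ with the boundary traces of the classical limits $\phi^b$ and $\phi^a$. The remaining steps reduce to Banach--Alaoglu together with a direct application of Proposition \ref{unfpr}.
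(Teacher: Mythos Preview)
Your proof is correct and, for the interface identities \eqref{CondInterface-0}, takes a genuinely different route from the paper. The paper introduces the auxiliary function $\overline{\phi}_\e(x)=\phi_\e(x)-\phi_\e(x',-x_3)$ on $\Omega_\e^M$; this vanishes on $\Sigma$, has $\|\nabla\overline{\phi}_\e\|_{L^2(\Omega_\e^M)}^2\le C/\e$, and a Poincar\'e inequality in the thin layer yields $\|\overline{\phi}_\e\|_{L^2(\Omega_\e^M)}^2\le C\e$. One then argues that ${\cal T}_\e(\overline{\phi}_\e)\rightharpoonup\widehat\phi-\phi^b_{|\Sigma}$ weakly in $L^2(\omega;H^1(Y))$, and since every ${\cal T}_\e(\overline{\phi}_\e)$ has zero trace on $y_3=0$, so does the weak limit. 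Your approach instead reads ${\cal T}_\e(\phi_\e)|_{y_3=0}$ directly as the $2$D unfolding of $\phi_\e|_\Sigma$ and matches the two limits. This is more transparent and avoids the reflection construction; the paper's route has the minor advantage of staying entirely within the $3$D boundary--layer unfolding, without introducing a separate $2$D operator.

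One point to sharpen: the claim that ``the 2D boundary unfolding of a weakly convergent trace converges weakly in $L^2(\omega\times Y')$ to the same function'' is false for mere weak $L^2$ convergence (take $w_\e(x')=\cos(2\pi x_1/\e)$). What makes your argument work is that weak $H^{1/2}(\Sigma)$ convergence of $\phi_\e|_\Sigma$ upgrades, via the compact embedding $H^{1/2}(\Sigma)\hookrightarrow L^2(\Sigma)$, to strong $L^2(\Sigma)$ convergence; the standard unfolding property then gives strong convergence of the $2$D unfolding in $L^2(\omega\times Y')$ to the constant-in-$(y_1,y_2)$ extension. With that made explicit, the matching with the trace at $y_3=0$ of $\widehat\phi$ (continuous under weak $L^2(\omega;H^1(Y))$ convergence) is rigorous, and the same applies at $y_3=1$.
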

\begin{proof} The function  $\phi_\e$ is extended by reflexion in a function  belonging to $H^1(\omega\times (0, 2L-\e))$ in order to obtain convergence \eqref{Eq4.2}$_2$. 

We only prove the first equality in \eqref{CondInterface-0}, the second one is obtained in the same way. 
Consider the function defined by
$$\overline{\phi}_\e(x_1,x_2,x_3)=\phi_\e(x_1,x_2,x_3)-\phi_\e (x_1,x_2,-x_3)\qquad x=(x_1,x_2,x_3)\in \Omega^M_\e.$$ It satisfies
$$\| \nabla \overline{\phi}_\e \|^2_{L^2(\O_{\e}^M )} \leq \| \nabla \phi_\e \|^2_{L^2(\O_{\e}^M )} +\| \nabla \phi_\e \|^2_{L^2(\O^{b} )} \leq {C\over \e},\qquad \overline{\phi}_\e=0\quad \hbox{on}\;\; \omega\times\{0\}.$$ Hence
$\| \overline{ \phi}_\e \|^2_{L^2(\O_{\e}^M )} \leq C\e$. Due to the convergences \eqref{Eq4.2}$_1$ and \eqref{Eq4.2}$_3$ we have
$${\cal T}_\e(\overline{\phi}_\e) \rightharpoonup \widehat{\phi}-\phi^{b}_{|\omega\times\{0\}}\quad \hbox{weakly in } L^2(\omega; H^1(Y)).$$ Since the trace of the function $y\longmapsto {\cal T}_\e(\overline{\phi}_\e)(x^{'},y)$ on the face $Y^{'}\times \{0\}$ vanishes for a.e. $x^{'}\in \omega$ the result is proved.
\end{proof}

\begin{theorem} 
\label{convergence}
Let $\{{\bf v}_\e\}_\e$ be a sequence in  ${\V}_\e$ satisfying
\begin{equation}\label{EQ. 400}
{\bf M}({\bf v}_\e)\le C
\end{equation} where the constant $C$ does not depend on $\e$. There exist a subsequence -still denoted $\e$- and $v^{b}\in H^1_\Gamma(\Omega^{b};\R^3)$,  $v^{a}\in H^1(\Omega^{a};\R^3)$,  $\widehat{v}^0\in L^2(\omega ; H^1_{per}(Y^0 ; \R^3))$,  $\widehat{v}^{j}\in L^2(\omega ; H^1(Y^j; \R^3))$, $a^{j}\in {\cal M}(\omega ; \R^3)$ and $b^{j}\in {\cal M}(\omega ; \R^3)$, ($j=1,\ldots, m$), such that
\begin{equation}\label{Eq4.3}
\begin{aligned}
v_\e & \rightharpoonup v^{b}\quad \hbox{weakly in } H^1_\Gamma(\O^{b};\R^3),\\
v_\e(\cdot +\e {\bf e}_3) & \rightharpoonup v^{a}\quad \hbox{weakly in } H^1(\O^{a};\R^3),\\
{\cal T}_\e(v_\e) & \rightharpoonup \widehat{v}^0\quad \hbox{weakly in } L^2(\omega; H^1(Y^0;\R^3)),\\
\e {\cal T}_{\e}(e (v_{\e}) ) &\rightharpoonup e_{y} (\widehat{v}^0) \quad \hbox{weakly  in } L^2(\omega\times Y^0; \R^{3\times 3}),\\
{\cal T}_\e(v^{j}_\e-r^{j}_{\bf v_\e}) & \rightharpoonup \widehat{v}^{j}\quad \hbox{weakly in } L^2(\omega; H^1(Y^{j};\R^3)),\\
{\cal T}_\e(v^{j}_\e) & \rightharpoonup \widehat{v}^{j}+r^{j}\quad \hbox{weakly-* in } {\cal M}(\omega\times Y^{j};\R^3),\\
\e {\cal T}_{\e}(e(v^{j}_{\e})) &\rightharpoonup e_{y}(\widehat{v}^j)\quad \text {weakly  in  }L^2(\omega \times Y^{j}; \R^{3\times 3}),\\
a^{j}_{\bf v_\e} & \rightharpoonup a^{j} \quad \hbox{weakly-* in}\; \; {\cal M}(\omega ; \R^3),\\
b^{j}_{\bf v_\e} & \rightharpoonup b^{j} \quad \hbox{weakly-* in}\; \; {\cal M}(\omega ; \R^3),
\end{aligned}
\end{equation}
where 
$$\forall y\in Y^{j},\qquad r^{j}(\cdot, y)=a^{j}+b^{j}\land (y-O^{j})\quad \hbox{in } {\cal M}(\omega\; ;\; \R^3).$$
Moreover we have
\begin{equation}\label{CondInterface}
 v^{b}(x', 0)=\widehat{v}^0(x', y_1,y_2,0),\qquad v^{a}(x', 0)=\widehat{v}^0(x', y_1,y_2,1)\quad \hbox{for a.e. } (x',y_1,y_2)\in \omega\times Y^{'}.
\end{equation}

\noindent Furthermore
\begin{equation}\begin{aligned}
\label{furthermore}
{\cal T}_{\e}^{bl, j}([{\bf v}_{\e}]_{S^{j}_{\e}})&\rightharpoonup (\widehat{v}^j + r^j - \widehat{v}^0)_{|S^{j}}  \quad \text{weakly $*$  in }  {\cal M}(\omega\times S^{j};\R^3),\qquad j=1,\ldots,m.
\end{aligned}
\end{equation}
\end{theorem}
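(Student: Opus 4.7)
The plan is to organize the proof in three blocks that mirror the geometric decomposition: the bulk blocks $\Omega^b$ and $\Omega^a$, the microscopic fields inside $Y^0$ and inside each inclusion $Y^j$, and finally the boundary traces on the crack surfaces $S^j_\e$ needed for the last statement \eqref{furthermore}.

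First I would invoke Proposition \ref{prk} to convert the hypothesis ${\bf M}({\bf v}_\e)\le C$ into the separate bounds $\|v_\e\|_{H^1(\Omega^b)}+\|v_\e(\cdot+\e{\bf e}_3)\|_{H^1(\Omega^a)}\le C$ (plus a vanishing-Dirichlet bound on $\Gamma$), $\e\|\nabla v_\e\|^2_{L^2(\Omega^0_\e)}+\e^{-1}\|v_\e\|^2_{L^2(\Omega^M_\e)}\le C$, and the inclusion bounds of \eqref{eqk}$_3$--\eqref{eqk}$_4$. Combined with Proposition \ref{unfpr}, the matrix estimate yields $\|\mathcal T_\e(v_\e)\|_{L^2(\omega;H^1(Y^0))}\le C$, the reflexion-extended upper block yields $v_\e(\cdot+\e{\bf e}_3)\rightharpoonup v^a$, and weak compactness delivers $v^b,v^a,\widehat v^0$ as the limits in \eqref{Eq4.3}$_{1\text{--}3}$. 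The convergence \eqref{Eq4.3}$_4$ for $\e\mathcal T_\e(e(v_\e))$ is immediate from the identity $e_y(\mathcal T_\e v)=\e\mathcal T_\e(e(v))$ of Proposition \ref{unfpr}.

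For each inclusion the bound \eqref{eqk}$_3$ gives $\|\mathcal T_\e(v^j_\e-r^j_{\bf v_\e})\|_{L^2(\omega;H^1(Y^j))}\le C$, producing the limit $\widehat v^j$ of \eqref{Eq4.3}$_5$ and, again via the $e_y=\e\mathcal T_\e e$ identity, \eqref{Eq4.3}$_7$. Because $a^j_{\bf v_\e},b^j_{\bf v_\e}$ are only $L^1$-bounded by \eqref{eqk}$_4$, their limits $a^j,b^j$ exist only in the weak-$*$ sense in $\mathcal M(\omega;\R^3)$, which gives \eqref{Eq4.3}$_{8,9}$ and hence, through the explicit formula $r^j_{\bf v_\e}(x',y)=a^j_{\bf v_\e}(x')+b^j_{\bf v_\e}(x')\wedge(y-O^j)$, the measure-valued convergence of $\mathcal T_\e(r^j_{\bf v_\e})$ to $r^j$ in $\mathcal M(\omega\times Y^j;\R^3)$; adding back the $L^2$-convergence of $\mathcal T_\e(v^j_\e-r^j_{\bf v_\e})$ yields \eqref{Eq4.3}$_6$. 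The periodicity of $\widehat v^0$ in $(y_1,y_2)$ and the matching conditions \eqref{CondInterface} are obtained by applying Lemma \ref{lem4.4.1} componentwise: the hypotheses of that lemma are exactly the bulk and matrix estimates just extracted.

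For the last statement \eqref{furthermore}, I exploit the Remark after the unfolding definition: $\mathcal T^{bl,j}_\e(\varphi)$ is the trace on $\omega\times S^j$ of $\mathcal T_\e(\varphi)$. I split
\[
\mathcal T^{bl,j}_\e([{\bf v}_\e]_{S^j_\e})=\mathcal T^{bl,j}_\e(v^j_\e-r^j_{\bf v_\e})+\mathcal T^{bl,j}_\e(r^j_{\bf v_\e})-\mathcal T^{bl,j}_\e(v_\e)
\]
and pass to the limit term by term. The outer terms converge weakly in $L^2(\omega\times S^j;\R^3)$ to $\widehat v^j_{|S^j}$ and $\widehat v^0_{|S^j}$ by continuity of the trace operator from $H^1(Y^j)$ (respectively $H^1(Y^0)$, since $S^j\subset\partial Y^j\subset\overline{Y^0}$) into $L^2(S^j)$, combined with the weak $L^2(\omega;H^1)$ convergences already secured. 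The middle term, equal to $a^j_{\bf v_\e}(x')+b^j_{\bf v_\e}(x')\wedge(y-O^j)$ on $\omega\times S^j$, converges weakly-$*$ in $\mathcal M(\omega\times S^j;\R^3)$ to $r^j_{|S^j}$ by \eqref{Eq4.3}$_{8,9}$. Summing in the common ambient space $\mathcal M(\omega\times S^j;\R^3)$ (into which the two $L^2$ limits embed continuously) yields \eqref{furthermore}. The main delicate point is precisely this incompatibility of topologies: the rigid motion contribution lives only in the space of measures, so the cleanest argument is to test the full sum against functions in $\mathcal C(\overline{\omega\times S^j};\R^3)$, using $L^2$ weak convergence to dispatch the first and third terms and weak-$*$ measure convergence for the middle one, and then observe the limits combine additively on every such test function.
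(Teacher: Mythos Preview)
Your overall strategy is sound and in fact considerably more explicit than the paper's own proof, which simply cites standard unfolding compactness results for \eqref{Eq4.3} and then invokes Lemma~\ref{lem4.4.1} for \eqref{CondInterface}. Your treatment of \eqref{furthermore} via the three-term splitting and the mixing of $L^2$-weak and measure-weak-$*$ limits is correct and is a point the paper leaves entirely implicit.

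There is, however, one genuine technical gap in your use of Lemma~\ref{lem4.4.1}. That lemma is stated for a sequence $\phi_\e\in H^1(\Omega)$ with a gradient bound on the \emph{full} layer $\Omega^M_\e$. Your field $v_\e$ lives only in $H^1(\Omega^*_\e;\R^3)$: it is undefined inside the inclusions $\Omega^j_\e$ and discontinuous across the open cracks $S^0_\e$, and the matrix estimate you extract from Proposition~\ref{prk} controls $\nabla v_\e$ only on $\Omega^0_\e$, not on $\Omega^M_\e$. So the hypotheses of Lemma~\ref{lem4.4.1} are \emph{not} ``exactly the bulk and matrix estimates just extracted''. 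The paper handles this by applying the lemma not to $v_\e$ but to the crack-filling extension $\widetilde v_\e\in H^1(\Omega;\R^3)$ constructed in Step~1 of the proof of Proposition~\ref{prunk}; the required $H^1(\Omega)$-bounds for $\widetilde v_\e$ are \eqref{eq.315} and \eqref{kineq}. Since $\widetilde v_\e=v_\e$ on $\Omega^b$, on $\Omega^a_\e$, and on the collar $\{ \mathrm{dist}(\cdot,\partial Y)<\eta/2\}$ of each cell (which contains the lateral faces and the faces $S^a_Y$, $S^b_Y$), the limits furnished by Lemma~\ref{lem4.4.1} for $\widetilde v_\e$ coincide with $v^a$, $v^b$, and with $\widehat v^0$ in a neighbourhood of $\partial Y$; this is what yields both the periodicity of $\widehat v^0$ and the interface identities \eqref{CondInterface}. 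Once you route the argument through $\widetilde v_\e$ in this way, your proof is complete.
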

\begin{proof} The convergences \eqref{Eq4.3} are the immediate consequences of the theorems in \cite{ciorp} and \cite{cdg3}. To prove \eqref{CondInterface} we apply the Lemma \ref{lem4.4.1} with the fields of displacements $\widetilde{v}_\e$ introduced in Step 1 of the  Proposition \ref{prunk} proof.
\end{proof}
\section{The contact problem for fixed $\e$}\label{S5}

Assume we are given the following symmetric bilinear form on ${\V}_\e$: 
\begin{equation*}
\forall ({\bf u},{\bf v})\in \big({\V}_\e\big)^2,\qquad {\mathbf A}^{\e}({\bf u}, {\bf v})\doteq\sum_{j=1}^{m} \int_{\O_{\e}^{j}} a^{\e} e(u^{j}) : \,e(v^{j}) \,dx + \int_{\O_{\e}^*} a^{\e} e(u) : \,e(v)\,dx,
\end{equation*}
where 
\begin{equation}
a^{\e} (x) = \left\{
 \begin{array}{ll}
  a^{a} (x) & \hbox{for a.e. }\; x \in \Omega^{a}_{\e},\\
  \e a^M_{\e} (x) & \hbox{for a.e. }\; x \in \Omega^M_{\e},\\
  a^{b} (x) & \hbox{for a.e. }\;  x \in \Omega^{b}.
 \end{array}
\right.
\end{equation}
The tensor fields $a_{\e}^M $, $a^{a}$, $a^{b}$ have the following properties.
\begin{itemize}
\item Symmetry: 
$$a^{\e} (x) \eta : \xi =a^{\e} (x) \xi : \eta \qquad \text{a.e. } x \in \Omega, \qquad \forall \xi, \eta \in \mathbb R^{3 \times 3}.$$
\item Boundedness: $a^{\e}$ belongs to $L^\infty(\O; \mathbb R^{3\times 3 \times 3 \times 3})$ and
$$\|a^M_{\e}\|_{L^\infty(\O^M_\e; \mathbb R^{3\times 3 \times 3 \times 3})}+ \|a^{a}\|_{L^\infty(\O^{a}; \mathbb R^{3\times 3 \times 3 \times 3})}+\|a^{b}\|_{L^\infty(\O^{b}; \mathbb R^{3\times 3 \times 3 \times 3})} \leq C.$$ The constant does not depend on $\e$.
\item Coercivity (with constant $\overline {\mathbf\alpha }>0$ independent of $\e$):
\begin{equation}\label{coer}
\begin{aligned}
&\overline \alpha  \;\eta : \eta \leq a_{\e}^M(x) \, \eta : \eta \;\; \hbox{for a.e. } x\in \O^M_\e,\\
&\overline \alpha \;\eta : \eta \leq a^{a}(x) \, \eta : \eta \;\; \hbox{for a.e. } x\in \O^{a},\\
&\overline \alpha \;\eta : \eta \leq a^{b}(x) \, \eta : \eta \;\; \hbox{for a.e. } x\in \O^{b},
\end{aligned}
\qquad \forall \eta \in \R^{3 \times 3}.
\end{equation}
\end{itemize}

Let ${\K}_\e$ be the convex set defined, for non negative functions $g^{j}_{\e}$ belonging to $L^1(S^{j}_{\e})$, $j=0,\ldots, m$, by
\begin{equation}
\label{kaepsilon}
{\K}_\e\doteq \Big \{  {\bf v}\in {\V}_\e, \quad [{\bf v}_{\nu}]_{S^{j}_\e}\leq g^{j}_{\e} \; \text{ on } S^{j}_{\e},\;\; j=0,\ldots, m\Big\}.
\end{equation}
The vector fields ${\bf v}\in \V_\e$ are the admissible deformation fields with respect to the reference configuration $\O_{\e}$.  By standard trace theorems, the jumps belong to $H^{1/2}(S^{j}_{\e})$.
The tensor field 
\begin{equation*}
\sigma^{\e} ({\bf v})\doteq a^{\e} e({\bf v}) \qquad \text{ in } \O_{\e}
\end{equation*}
is the stress tensor associated to the deformation $v$ (not to be confused with the surface measures $d\sigma$!).
\medskip

The  functions $g^{0}_{\e}$ and the $g^{j}_{\e}$'s are the original gaps (in the reference configuration), and the corresponding inequalities in the definition of ${\K}_\e$ represent the non-penetration conditions. In case there is contact in the reference configuration, these functions are just 0.
\smallskip

Consider also the family of convex maps $\Psi^{j}_{\e}, \; 0\leq j\leq m$,  where $\Psi^{j}_{\e} $  is non negative, continuous on $L^1(S^{j}_{\e})$  and satisfies   
\begin{equation}
\label{Mj}
\begin{aligned}
w\in L^1(S^j_\e),\qquad &M^{j}_{\e} \;\|w\|_{L^{1}(S^{j}_{\e})} -a^j_\e \leq \Psi^{j}_{\e}(w)\leq M^{'j}_{\e} \;\|w\|_{L^{1}(S^{j}_{\e})}
\\
& \text{for non negative real numbers }M^{'j}_{\e},\; M^{j}_{\e},\;\; a^j_\e,\quad M^{j}_{\e}\not=0,\;\; M^{'j}_{\e}\not=0.
\end{aligned}
\end{equation}

In case of Tresca friction, the maps $\Psi^{j}_{\e}$ are explicitly given by
\begin{equation}\label{Tresca explicit}
\Psi^{j}_{\e}(w)\doteq \int_{S^{j}_{\e}}G^{j}_{\e}(x) |w(x)|\, d\sigma(x), \qquad G^{j}_{\e}\in L^\infty(S^j_\e),\quad w\in L^1(S^j_\e)
\end{equation} 
with $G^{j}_{\e}$ bounded from below by $M^{j}_{\e}>0 $ for $j=0,\ldots, m$.
\medskip

{\bf Problem ${\cal P}_{\e}$:} Given ${\bf f}_{\e} = (f, f^{1}_{\e}, \cdots, f^{m}_{\e})$ in $L^2(\O) \times L^{\infty}(\O^{1}_{\e})\times\ldots\times L^{\infty}(\O^{m}_{\e})$ find a minimizer over ${\K}_\e$ of the functional 
\begin{equation} \label{problem}
{\cal E}_{\e} ({\bf v})\doteq  \frac{1}{2} \,{\mathbf A}^{\e}({\bf v},{\bf v})+ \sum_{j=0}^{m}\Psi^{j}_{\e}([{\bf v}_{\tau}]_{S^{j}_{\e}}) -\int_{\O_{\e}} {\bf f}_{\e} \cdot {\bf v} \,dx.
\end{equation}

From the properties of convexity of the $\Psi^{j}_{\e}, \, j=0, \ldots, m$, the solutions of ${\cal P}_{\e}$ are the same as that of the following problem:
\medskip

{\bf Problem $\mathcal P^{'}_{\e}$}: Find ${\bf u}_{\e}\in \K_{\e}$ such that for every ${\bf v}\in \K_{\e}$,
\begin{equation}
\label{wf}
 {\mathbf A}^{\e}({\bf u}_{\e},{\bf v}- {\bf u}_{\e})+ \sum_{j=0}^{m}\big(\Psi^{j}_{\e}([{\bf v}_{\tau}]_{S^{j}_{\e}})- \Psi^{j}_{\e}([({\bf u}_{\e})_{\tau}]_{S^{j}_{\e}})\big) \geq\int_{\O_{\e}}{\bf f}_{\e} \cdot ({\bf v} - {\bf u}_{\e})\,dx.
\end{equation}

The strong formulation of the problem is (with $\sigma^\e$ for the stress tensor $\sigma^\e({\bf u}_\e)$):
\begin{gather}
\label{sf}
\left\{
\begin{array}{l}
 \nabla \cdot \sigma^{\varepsilon} = -{\bf f}_{\e} \quad \text{ in } \Omega_{\e}, \\ [1mm]
 \sigma^{\varepsilon}(\nu)_{\nu} \leq 0,\\ [1mm]
 \sigma^{\e}(\nu)_{\nu}\big([({\bf u}_{\e})_{\nu}]_{S^{j}_{\e}}-g^{j}_{\e}\big)=0 \\
 \sigma^{\e}(\nu)_{\tau }\in \partial \Psi^{j}_{\e} ([({\bf u}_{\e})_{\tau}]_{S^{j}_{\e}}) \quad\text{ on } S^{j}_{\e}\quad \text { for } j=0, \ldots, m,
 \end{array}
 \right.
\end{gather}
where $\partial \Psi^{j}_{\e}$ denotes the subdifferential of  $\Psi^{j}_{\e}$.
\smallskip

The corresponding explicit Tresca conditions on the interfaces $S_{\e}^j$ with the function $\Psi_{\e}^j$ given in \eqref{Tresca explicit} are as follows:
\begin{gather*}
\left\{
\begin{array}{l}
 |\sigma^{\e}(\nu)_{\tau}| < G_{\e}^j (x) \Rightarrow [({\bf u}_{\e})_{\tau}]_{S_{\e}^j} = 0,\\
 |\sigma^{\e}(\nu)_{\tau}| = G_{\e}^j (x) \Rightarrow \exists \lambda_{\e}^j \in S_{\e}^j \text{ s.t. } |{[({\bf u}_{\e})_\tau]_{S_{\e}^j}}| + \lambda_{\e}^j|\sigma^{\e}(\nu)_\tau| = 0 \text{ a.e. on } S_{\e}^j.
 \end{array}
 \right.
\end{gather*}

Our aim now is to study the behavior of the solutions ${\bf u}_{\e}$ for small values of the parameter $\e$. We will do this by studying the asymptotic behavior of the sequence ${\bf u}_{\e}$ for $\e \rightarrow 0$.
When $\e$ tends to zero, the thin layer $\Omega_{\e}^M$ approaches the interface $\Sigma$. The domain $\Omega_{\e}^{a}$ tends to the domain $\Omega^{a}$.


\subsection{A priori estimates and existence of solutions for the Problem $\mathcal P_{\e}$}
The first step in the proof of existence of the solution consists in obtaining a bound for minimizing sequences. We use the generic notation $C$ for constants which can be expressed independently of $\e$. 

Since for ${\bf v}=0$ we have ${\cal E}_{\e} ({\bf v})=0$, without lost of generality we can assume that every  field ${\bf u}$ of a minimizing sequence 
satisfies ${\cal E}_{\e} ({\bf u})\leq 0$.
\medskip

Let ${\bf u}$ be in ${\K}_\e$, such that ${\cal E}_{\e} ({\bf u})\leq 0$. Hence we have
\begin{multline} \label{5.0}
{\overline \alpha\over 2}\Big(\e \sum_{j=1}^{m} \|e (u^j)\|^{2}_{L^{2}(\Omega_{\e}^j)} + \e \|e (u)\|^{2}_{L^{2}(\Omega_{\e}^0)} + \|e(u)\|^2_{L^2(\O^{b})} + \|e(u)\|^2_{L^2(\O^{a}_{\e})}\Big) + \sum_{j=0}^{m} \Psi^{j}_{\e}([{\bf u}_{\tau}]_{S^{j}_{\e}})  \\
\leq \int_{\O^*_{\e}} f\cdot u\,dx + \sum_{j=1}^{m} \int_{\Omega_{\e}^j} f^{j}_{\e} \cdot u^j \,dx.
\end{multline}
Now we use \eqref{eqk}$_2$ to get
\begin{equation} \label{f0u0}
\int_{\O^*_{\e}} f\cdot u\;dx \leq C \| f \|_{L^2 (\Omega)} \Big( \| u \|_{L^2(\O^{b})} + \| u \|_{L^2(\O_{\e}^{a})} + \| u\|_{L^2 (\Omega_{\e}^0)} \Big) \leq C \| f \|_{L^2 (\Omega)} {\bf M}({\bf u}).
\end{equation}
The other terms on the right--hand side  for $1\leq j\leq m$ are simply bounded as follows:
\begin{equation} \label{f0uj}
\sum_{j=1}^m\int_{\O_{\e}^{j}}f^{j}_{\e}\cdot u \,dx \leq  \sum_{j=1}^m\|f^{j}_{\e} \|_{L^\infty(\O_{\e}^j)} \; \|u \|_{L^{1}(\O^{j}_{\e})} \leq C \e \max_{j=1,\ldots,m} \|f^{j}_{\e} \|_{L^\infty(\O_{\e}^j)}  \; {\bf M}({\bf u}).
\end{equation}
The last inequality is obtained due to \eqref{eqk}$_5$. Hence
\begin{equation}\label{C0}
\int_{\O^*_{\e}} f\cdot u\,dx + \sum_{j=1}^{m} \int_{\Omega_{\e}^j} f^{j}_{\e} \cdot u^j \,dx\leq C^{'}_0\big(\e  \max_{j=1,\ldots,m} \|f^{j}_{\e} \|_{L^\infty(\O_{\e}^j)}  +  \| f\|_{L^2 (\Omega)} \big){\bf M}({\bf u}).
\end{equation}

\begin{proposition} \label{proposition5.2}(Estimate for minimizing sequences of ${\cal E}_\e$.)
We assume that 
\begin{equation}\label{Cond}
C^{'}_0 \Big(\e  \max_{j=1,\ldots,m} \|f^{j}_{\e} \|_{L^\infty(\O_{\e}^j)}  +  \| f \|_{L^2 (\Omega)} + \sum_{k = 0}^m\Big(\|g^k_{\e}\|_{L^1(S^{k}_\e)}+{a^k_\e\over M^k_\e}\Big)\Big)\max_{k=1,\ldots,m}\Big(\frac{1}{M^k_{\e}}\Big)\leq {1\over 2}.
\end{equation}
Then, there exists a  constant $C$ which does not depend on $\e$, such that  for every field ${\bf u}$ satisfying ${\cal E}_{\e} ({\bf u})\leq 0$ we have
\begin{equation} \label{bound1}
{\bf M}({\bf u}) \leq C \Big(\e  \max_{j=1,\ldots,m} \|f^{j}_{\e} \|_{L^\infty(\O_{\e}^j)}  +  \| f\|_{L^2 (\Omega)} + \sum_{k = 0}^m\Big(\|g^k_{\e}\|_{L^1(S^{k}_\e)}+{a^k_\e\over M^k_\e}\Big)\Big).
\end{equation}
\end{proposition}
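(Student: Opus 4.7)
From the assumption ${\cal E}_\e({\bf u})\leq 0$, together with the coercivity \eqref{coer} of ${\mathbf A}^\e$ and the upper estimate \eqref{C0} of the linear load terms, I would first obtain
\begin{equation*}
\frac{\overline\alpha}{2}E_2({\bf u}) + \sum_{j=0}^{m}\Psi^{j}_{\e}\big([{\bf u}_\tau]_{S^{j}_\e}\big)\leq F_0\,{\bf M}({\bf u}),\qquad F_0 := C'_0\Big(\e\max_{j}\|f^{j}_\e\|_{L^\infty(\O_\e^{j})}+\|f\|_{L^2(\O)}\Big),
\end{equation*}
where $E_2({\bf u})$ denotes the quadratic expression appearing under the square root in the definition of ${\bf M}({\bf u})$. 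Both summands on the left being nonnegative, this single display contains two usable facts: $\sum_j\Psi^{j}_\e([{\bf u}_\tau]_{S^{j}_\e})\leq F_0{\bf M}({\bf u})$ and $E_2({\bf u})\leq(2/\overline\alpha)F_0{\bf M}({\bf u})$.

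The next step is to control each of the three contributions to ${\bf M}({\bf u})=\sqrt{E_2({\bf u})}+\sum_{j=1}^{m}\big(\|[{\bf u}_\nu]^+_{S^{j}_\e}\|_{L^1}+\|[{\bf u}_\tau]_{S^{j}_\e}\|_{L^1}\big)$. The admissibility ${\bf u}\in\K_\e$ together with $g^{j}_\e\geq 0$ yields $\|[{\bf u}_\nu]^+_{S^{j}_\e}\|_{L^1}\leq\|g^{j}_\e\|_{L^1(S^{j}_\e)}$ with no further ${\bf M}({\bf u})$-dependence. For the tangential jumps, dividing \eqref{Mj} by $M^{j}_\e$ and summing over $j=1,\ldots,m$ gives, with $\mu:=\min_{j\geq 1}M^{j}_\e$,
\begin{equation*}
\sum_{j=1}^{m}\|[{\bf u}_\tau]_{S^{j}_\e}\|_{L^1}\leq\frac{1}{\mu}\sum_{j=1}^{m}\Psi^{j}_\e([{\bf u}_\tau]_{S^{j}_\e})+\sum_{k=0}^{m}\frac{a^{k}_\e}{M^{k}_\e}\leq\frac{F_0}{\mu}{\bf M}({\bf u})+\sum_{k=0}^{m}\frac{a^{k}_\e}{M^{k}_\e},
\end{equation*}
using $\Psi^{0}_\e\geq 0$ and the first display. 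For the quadratic part, the bound $\sqrt{E_2({\bf u})}\leq\sqrt{2F_0{\bf M}({\bf u})/\overline\alpha}$ is linearized by Young's inequality $\sqrt{ab}\leq\lambda a+b/(4\lambda)$ with $\lambda=1/8$, producing $\sqrt{E_2({\bf u})}\leq\frac{1}{8}{\bf M}({\bf u})+CF_0$ for a constant $C$ depending only on $\overline\alpha$.

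Summing the three pieces,
\begin{equation*}
{\bf M}({\bf u})\leq\Big(\frac{1}{8}+\frac{F_0}{\mu}\Big){\bf M}({\bf u})+C\Big(F_0+\sum_{k=0}^{m}\|g^{k}_\e\|_{L^1(S^{k}_\e)}+\sum_{k=0}^{m}\frac{a^{k}_\e}{M^{k}_\e}\Big).
\end{equation*}
The smallness hypothesis \eqref{Cond} is calibrated so that $F_0/\mu\leq C'_0 H/\mu\leq 1/2$, where $H$ is the full data combination on the right of \eqref{bound1}; hence the coefficient of ${\bf M}({\bf u})$ on the right is at most $5/8$, and absorbing it into the left-hand side yields \eqref{bound1}.

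The main obstacle is the order of operations. Invoking \eqref{Mj} prematurely, before the sum $\sum_j\Psi^{j}_\e$ has been bounded by $F_0{\bf M}({\bf u})$, generates a parasitic $\sqrt{\sum_k a^{k}_\e}$-type contribution incompatible with the linear-in-data form of \eqref{bound1}. One must keep $\Psi^{j}_\e$ attached to the coercivity side of the energy inequality, divide by $M^{j}_\e$ only afterwards, and rely on the observation that the factor $1/\mu$ emerging from that division is exactly the one appearing in the smallness hypothesis \eqref{Cond}.
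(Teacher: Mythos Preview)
Your proof is correct and follows essentially the same strategy as the paper: start from the coercivity inequality \eqref{5.0}--\eqref{C0}, control $\|[{\bf u}_\nu]^+\|_{L^1}$ via the gap functions and $\|[{\bf u}_\tau]\|_{L^1}$ via \eqref{Mj}, and use the smallness hypothesis \eqref{Cond} together with Young's inequality to close the estimate. The only cosmetic difference is the bookkeeping---the paper first absorbs $\sum_j\Psi^j_\e$ into the left of \eqref{ineq_eta} using \eqref{Cond}, then solves a quadratic inequality for $\xi_{\bf u}$ and afterwards bounds $\eta_{\bf u}$, whereas you write a single self-referential inequality for ${\bf M}({\bf u})$ and absorb directly; both routes are equivalent.
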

\begin{proof} 
From \eqref{5.0} we derive
\begin{multline}
\label{ineq512}
{\overline \alpha\over 2}\Big(\e \sum_{j=1}^{m} \|e (u^j)\|^{2}_{L^{2}(\Omega_{\e}^j)} + \e \|e (u)\|^{2}_{L^{2}(\Omega_{\e}^0)} + \|e(u)\|^2_{L^2(\O^{b})} + \|e(u)\|^2_{L^2(\O^{a}_{\e})}\Big) + \sum_{j=0}^{m} \Psi^{j}_{\e}([{\bf u}_{\tau}]_{S^{j}_{\e}}) \\
\leq C^{'}_0 {\bf M}({\bf u}) \; \Big(\e  \max_{j=1,\ldots,m} \|f^{j}_{\e} \|_{L^\infty(\O_{\e}^j)} + \| f\|_{L^2 (\Omega)} \Big).
\end{multline}
Denote
$$\begin{aligned}
  &\xi_{\bf v} =  \sqrt{\|e(v)\|^2_{L^2(\O^{b} )}+\|e(v)\|^2_{L^2(\O^{a}_\e )}+\e\|e(v)\|^2_{L^2(\O_{\e}^0)}+\sum_{j=1}^m\e\|e(v^{j})\|^2_{L^2(\O_{\e}^{j} )}},\\
  &\eta_{\bf v} = \sum_{j=1}^m \big(\|[{\bf v}_\nu]^{+}_{S^{j}_{\e}}\|_{L^1(S^{j}_\e)} + \|[{\bf v}_\tau]_{S^{j}_{\e}}\|_{L^1(S^{j}_\e)}\big).
 \end{aligned}$$
Remark that
$${\bf M}({\bf u}) = \xi_{\bf u} + \eta_{\bf u}.$$
From \eqref{ineq512} we get (recall that $\Psi^{0}_{\e}([{\bf u}_{\tau}]_{S^{j}_{\e}}) $ is non negative)
\begin{equation}
\label{ineq514}
{\overline{\alpha}\over 2}\xi^2_{\bf u} + \sum_{j=1}^{m} \Psi^{j}_{\e}([{\bf u}_{\tau}]_{S^{j}_{\e}}) \leq C^{'}_0 (\xi_{\bf u} + \eta_{\bf u}) \; \Big(\e  \max_{j=1,\ldots,m} \|f^{j}_{\e} \|_{L^\infty(\O_{\e}^j)}  +  \| f \|_{L^2 (\Omega)} \Big).
\end{equation}

\noindent \textit{Step 1.} In this step we obtain a first estimate on  $\eta_{\bf u}$. By definition of ${\K}_\e$ (see \eqref{kaepsilon}) we have
\begin{equation}
\label{ineq-nu}
 \|[{\bf u}_{\nu}]^{+}_{S^{j}_{\e}}\|_{L^1(S^{j}_\e)} \leq \|g^j_{\e}\|_{L^1(S^{j}_\e)}
\end{equation}
and \eqref{Mj}  gives
\begin{equation}
\label{ineq-tau}
\|[{\bf u}_{\tau}]_{S^{j}_{\e}}\|_{L^1(S^{j}_\e)}\leq \frac{1}{M^j_{\e}} \,\Psi^j_{\e}([{\bf u}_{\tau}]_{S^{j}_{\e}})+{a^j_\e\over M^j_\e}.
\end{equation}
Hence,
\begin{equation}
\label{Eq5.18}
\begin{aligned}
\eta_{\bf u}&\leq \sum_{j=1}^m \|g^j_{\e}\|_{L^1(S^{j}_\e)}+\sum_{j=1}^m\Big(\frac{1}{M^j_{\e}} \,\Psi^j_{\e}([{\bf u}_{\tau}]_{S^{j}_{\e}})+{a^j_\e\over M^j_\e}\Big)\\
&\leq \sum_{j=1}^m \|g^j_{\e}\|_{L^1(S^{j}_\e)}+\max_{k=1,\ldots,m}\Big(\frac{1}{M^k_{\e}}\Big)\sum_{j=1}^m \,\Psi^j_{\e}([{\bf u}_{\tau}]_{S^{j}_{\e}})+\sum_{j=1}^m{a^j_\e\over M^j_\e}.
\end{aligned}
\end{equation}
The above inequalities and \eqref{ineq514} lead to
\begin{equation}
\label{ineq_eta}
\begin{aligned}
{\overline{\alpha}\over 2}\xi^2_{\bf u} + \sum_{j=1}^{m} \Psi^{j}_{\e}([{\bf u}_{\tau}]_{S^{j}_{\e}}) \leq &C^{'}_0\Big( \xi_{\bf u} +\sum_{j=1}^m \|g^j_{\e}\|_{L^1(S^{j}_\e)}+\sum_{j=1}^m{a^j_\e\over M^j_\e}\Big)\Big(\e  \max_{j=1,\ldots,m} \|f^{j}_{\e} \|_{L^\infty(\O_{\e}^j)}  +  \| f \|_{L^2 (\Omega)} \Big)\\
+&C^{'}_0 \Big(\e  \max_{j=1,\ldots,m} \|f^{j}_{\e} \|_{L^\infty(\O_{\e}^j)}  +  \| f \|_{L^2 (\Omega)} \Big)\max_{k=1,\ldots,m}\Big(\frac{1}{M^k_{\e}}\Big)\sum_{j=1}^m \,\Psi^j_{\e}([{\bf u}_{\tau}]_{S^{j}_{\e}})
\end{aligned}
\end{equation}
\textit{Step 2.} Estimates on $\xi_{\bf u}$ and $\eta_{\bf u}$. Assuming \eqref{Cond}, one has
\begin{equation}\label{Cond2}
C^{'}_0 \Big(\e  \max_{j=1,\ldots,m} \|f^{j}_{\e} \|_{L^\infty(\O_{\e}^j)}  +  \| f \|_{L^2 (\Omega)} \Big)\max_{k=1,\ldots,m}\Big(\frac{1}{M^k_{\e}}\Big)\leq {1\over 2}.
\end{equation}
Inserting \eqref{Cond2} in \eqref{ineq_eta}, we obtain
\begin{equation}
{ \overline{\alpha}\over 2}\xi^2_{\bf u} \leq  {\overline{\alpha}\over 2}\xi^2_{\bf u} + {1\over 2} \sum_{j=0}^{m} \Psi^{j}_{\e}([{\bf u}_{\tau}]_{S^{j}_{\e}}) \leq C^{'}_0 \Big( \xi_{\bf u} + \sum_{k = 0}^m\|g^k_{\e}\|_{L^1(S^{k}_\e)}+\sum_{j=1}^m{a^j_\e\over M^j_\e} \Big) \; \Big(\e \max_{j=1,\ldots,m} \|f^{j}_{\e} \|_{L^\infty(\O_{\e}^j)} +  \| f \|_{L^2 (\Omega)} \Big) .
\end{equation}
The  classical inequality $\ds ab\leq   {1\over 2}a^{2} +\frac{1}{2}\, b^{2}$ allows to obtain
\begin{equation}
 \xi_{\bf u} \leq C^{'}_1\Big(\sum_{k = 1}^m\Big(\|g^k_{\e}\|_{L^1(S^{k}_\e)}+{a^k_\e\over M^k_\e}\Big)+\e \max_{j=1,\ldots,m} \|f^{j}_{\e} \|_{L^\infty(\O_{\e}^j)} +  \| f \|_{L^2 (\Omega)} \Big). 
\end{equation}
From the last inequality and \eqref{ineq512} it follows that
\begin{equation}
\label{ineq-xi-psi}
 \sum_{j=0}^{m} \Psi^{j}_{\e}([{\bf u}_{\tau}]_{S^{j}_{\e}}) \leq C^{'}_2\Big(\sum_{k = 1}^m\Big(\|g^k_{\e}\|_{L^1(S^{k}_\e)}+{a^k_\e\over M^k_\e}\Big)+\e \max_{j=1,\ldots,m} \|f^{j}_{\e} \|_{L^\infty(\O_{\e}^j)} +  \| f \|_{L^2 (\Omega)} \Big)^2.
\end{equation} 
Combining \eqref{Eq5.18} and \eqref{ineq-xi-psi} and taking into account \eqref{Cond} we get
\begin{equation}
\label{eta-fin}
 \eta_{\bf u} \leq C^{'}_3\Big(\sum_{k = 1}^m\Big(\|g^k_{\e}\|_{L^1(S^{k}_\e)}+{a^k_\e\over M^k_\e}\Big)+\e \max_{j=1,\ldots,m} \|f^{j}_{\e} \|_{L^\infty(\O_{\e}^j)} +  \| f \|_{L^2 (\Omega)} \Big).
\end{equation}
The constants $C^{'}_1$, $C^{'}_2$ and $C^{'}_3$ depend  on $C^{'}_0$ and $\overline{\alpha}$. 
Finally \eqref{bound1} is proved.
\end{proof}
\begin{proposition} \label{proposition5.3}(Existence of solutions for ${\cal P}_\e$.) 
Under assumption \eqref{Cond} there exists at least one global minimizer for the functional ${\cal E}_\e$.
\end{proposition}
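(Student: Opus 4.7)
The plan is to apply the direct method of the calculus of variations on the Hilbert space $\mathbb{V}_\varepsilon$. Since $\mathcal{E}_\varepsilon(\mathbf 0)=0$, any minimizing sequence $\{\mathbf{u}_n\}\subset \mathcal{K}_\varepsilon$ may be assumed to satisfy $\mathcal{E}_\varepsilon(\mathbf{u}_n)\le 0$ for all $n$. Under hypothesis \eqref{Cond}, Proposition \ref{proposition5.2} then yields $\mathbf{M}(\mathbf{u}_n)\le C_0$ uniformly in $n$, and the estimate \eqref{L2R-Final} upgrades this to a uniform bound $\|\mathbf{u}_n\|_{\mathbb{V}_\varepsilon}\le C$. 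Since $\mathbb{V}_\varepsilon$ is a Hilbert space (at $\varepsilon$ fixed), one extracts a subsequence $\mathbf{u}_n\rightharpoonup \mathbf{u}_\star$ weakly in $\mathbb{V}_\varepsilon$.

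Next I would check that $\mathbf{u}_\star\in\mathcal{K}_\varepsilon$. The set $\mathcal{K}_\varepsilon$ is convex (the inequality constraints $[\mathbf{v}_\nu]_{S^j_\varepsilon}\le g^j_\varepsilon$ are affine) and closed for the strong topology of $\mathbb{V}_\varepsilon$, by the continuity of the trace and jump operators from $\mathbb{V}_\varepsilon$ into $H^{1/2}(S^j_\varepsilon)\hookrightarrow L^1(S^j_\varepsilon)$; hence by Mazur's lemma it is weakly closed.

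The remaining point is the weak lower semicontinuity of $\mathcal{E}_\varepsilon$ along this subsequence. The bilinear form $\mathbf{A}^\varepsilon$ is symmetric, continuous, and coercive thanks to \eqref{coer} together with the Korn inequalities of Propositions \ref{prunk} and \ref{newKorn}; the quadratic form $\mathbf{v}\mapsto \frac{1}{2}\mathbf{A}^\varepsilon(\mathbf{v},\mathbf{v})$ is therefore convex and continuous on $\mathbb{V}_\varepsilon$, hence weakly l.s.c. The friction terms $\Psi^j_\varepsilon$ are, by assumption, convex and continuous on $L^1(S^j_\varepsilon)$; since the jumps $[\mathbf{u}_n\,_\tau]_{S^j_\varepsilon}$ converge strongly in $L^1(S^j_\varepsilon)$ to $[\mathbf{u}_\star\,_\tau]_{S^j_\varepsilon}$ by compactness of the trace $H^1\to L^2\to L^1$ on the fixed surface $S^j_\varepsilon$ (recall $\varepsilon$ is fixed), these terms actually pass to the limit continuously. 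Finally the linear loading term $-\int_{\Omega_\varepsilon}\mathbf{f}_\varepsilon\cdot\mathbf{v}\,dx$ is weakly continuous on $\mathbb{V}_\varepsilon$ by the $L^2$ assumptions on $\mathbf{f}_\varepsilon$. Adding these three facts gives
\[
\mathcal{E}_\varepsilon(\mathbf{u}_\star)\le \liminf_{n\to\infty}\mathcal{E}_\varepsilon(\mathbf{u}_n)=\inf_{\mathcal{K}_\varepsilon}\mathcal{E}_\varepsilon,
\]
so $\mathbf{u}_\star$ is a global minimizer.

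The main obstacle is really the a priori bound, which has already been supplied by Proposition \ref{proposition5.2}; once the minimizing sequence is known to be bounded in $\mathbb{V}_\varepsilon$, the rest is standard convex analysis on a Hilbert space with a closed convex constraint, the only care being to invoke the correct Korn-type inequalities of Section 3 so that $\mathbf{E}(\cdot)^{1/2}$ controls $\|\cdot\|_{\mathbb{V}_\varepsilon}$ at the $\varepsilon$-fixed level via \eqref{L2R-Final}.
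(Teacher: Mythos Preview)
Your proof is correct and follows essentially the same route as the paper's: bound a minimizing sequence via Proposition~\ref{proposition5.2} and \eqref{L2R-Final}, extract a weak limit in the Hilbert space $\V_\e$, use that $\K_\e$ is convex and (strongly, hence weakly) closed, and invoke weak lower semicontinuity of each term of ${\cal E}_\e$. The paper is terser, simply stating that ${\cal E}_\e$ is bounded below, convex, and weakly l.s.c.\ as a sum of such functions.

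One small slip worth noting: ${\bf A}^\e$ is \emph{not} coercive on $\V_\e$, because a rigid motion supported in a single inclusion $\O^j_\e$ has zero strain but nonzero $\V_\e$-norm. This does not affect your argument, since convexity of $\mathbf v\mapsto\frac12{\bf A}^\e(\mathbf v,\mathbf v)$ only needs ${\bf A}^\e(\mathbf v,\mathbf v)\ge 0$, which follows directly from \eqref{coer}; coercivity plays no role in the weak l.s.c.\ step.
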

\begin{proof} Let $\{{\bf u}_\eta\}_{\eta>0}$ be a minimizing sequence for ${\cal P}_\e$.
Due to \eqref{bound1} and  \eqref{L2R-Final} there exists a constant $C$ which does not depend on $\eta$ (but which depends on $\e$) such that 
$${\bf u}_\eta=(u_\eta, u_\eta^1 \ldots, u_\eta^{m})\in \V_\e\qquad \|u_\eta\|_{ H^1(\O_{\e}^* ; \R^3 )}+\sum_{j=1}^m\|u_\eta^j\|_{H^{1}(\O^{j}_{\e} ; \R^3 )}\leq C.$$

\begin{remark}
Estimate \eqref{bound1} is not an estimate of a norm. If we  use \eqref{eqk}, we deal with some $L^1$-estimates and with the corresponding weak limits which are measures.  That is why we prefer to consider estimate \eqref{L2R-Final}; in this case the weak limit belongs to $\K_\e$. But this estimate is bad for the asymptotic process. 
\end{remark}

Since ${\cal E}_{\e}$ is bounded from below, convex and weakly lower semicontinuous as a sum of weakly lower semicontinuous functions, there exists at least a minimizer ${\bf u}_\e \in {\K}_{\e}$ for ${\cal E}_{\e}$.
\end{proof}
\begin{remark} Let ${\bf u}_\e=(u_\e, u_\e^1 \ldots, u_\e^{m})\in \K_\e$, ${\bf u}^{'}_\e=(u^{'}_\e, u^{' 1}_\e \ldots, u^{' m}_\e)\in \K_\e$ be two minimizers of ${\cal P}_\e$, both fields satisfy \eqref{wf}. Hence
$$u_\e=u^{'}_\e,\qquad u^{ j}_\e-r^{j}_{\bf u_\e}=u^{' j}_\e-r^{' j}_{\bf u'_\e},\qquad j=1,\ldots,m.$$
\end{remark}

\section{Main result}
In this section, we only consider the case of Tresca friction.

\subsection{Hypotheses on $g_{\e}^j$, $G_{\e}^j$ and $f_{\e}^j$}
\label{hpth}
To pass to the limit in the homogenization process, we need structural assumptions concerning the Tresca data which are more precise than those of Section \ref{S5}.
Hence, we assume that there exist
\begin{enumerate}
 \item $g^j \in L^1 (S^j)$, $j = 0,\ldots, m$, such that
 $$g^{j}_{\e} = g^j \Big(\left\{\frac{\cdot}{\varepsilon}\right\}_Y\Big),$$
and therefore $$\mathcal T_{\e}^{bl, j} (g^j_{\e}) (x', y) = g^j(y) \quad \text{ for a.e. } (x', y) \in \omega \times S^j.$$
 \item  $G^j \in {\cal C}_c(\omega\times S^j)$, $j = 0,\ldots, m$, such that
\begin{gather*}
 \mathcal T_{\e}^{bl, j} (G^j_{\e})  \rightarrow G^j\;\; \hbox{strongly in }L^\infty(\omega \times S^j),\\
G^j(x', y) \geq M^j \quad \text{ for any  }(x',y)\in  \omega \times S^j, \enskip M^j >0.
 \end{gather*}
 \item $F^j \in {\cal C}_c(\omega \times Y^j;\R^3)$, $j = 1,\ldots, m$, such that
 \begin{equation}
f^j_{\e} (x) = \frac{1}{\e} F^j \Big(\e\Big[\frac{x'}{\e}\Big]_{Y'}, \e\Big\{\frac{x}{\e}\Big\}_Y\Big), \quad \text{ for a.e.  } x \in  \Omega^j_\e.
\end{equation}
\end{enumerate}
The Assumption \eqref{Cond} becomes
\begin{equation}\label{Cond-fin}
C^{'}_0 \Big(\max_{j=1,\ldots,m} \big(\|F^{j} \|_{L^\infty(\omega\times Y^j ; \R^3)}\big)  +  \| f \|_{L^2 (\Omega;\R^3)} + \sum_{j = 0}^m\|g^j\|_{L^1(S^{j})}\Big)\max_{j=1,\ldots,m}\Big(\frac{1}{M^j}\Big)\leq {1\over 2}.
\end{equation}

\subsection{The limit problem}
We equip the product space 
$$\H=\ds H^1_\Gamma(\Omega^{b};\R^3)\times H^1(\Omega^{a};\R^3)\times L^2(\omega; H^1_{per} (Y^0 ;\R^3 )) \times \prod_{j=1}^m L^2(\omega; H^1(Y^j;\R^3)) \times  [{\cal M}(\omega;\R^3)]^m \times [{\cal M}(\omega;\R^3)]^m$$ with the norm 
$$
\|{{\bf V}}\|_\H=\|v^{b}\|_{H^1(\O^{b};\R^3)}+\|v^{a}\|_{H^1(\O^{a};\R^3)}+\sum_{j=0}^m\|\widehat{v}^j\|_{L^2(\omega; H^1(Y^j;\R^3))}+\|c\|_{[{\cal M}(\omega;\R^3)]^m}+\|d\|_{[{\cal M}(\omega;\R^3)]^m}
$$
where ${{\bf V}}=(v^{b},v^{a},\widehat{v}^0, \ldots, \widehat{v}^m, c, d)\in \H$. 

We set
\begin{equation*}
\begin{aligned}
\V= \Big\{ {{\bf V}} \in \H\; \; | \;\; &v^{b}(x^{'}, 0)=\widehat{v}^0(x^{'}, y_1,y_2,0),\enskip v^{a}(x^{'}, 0)=\widehat{v}^0(x^{'}, y_1,y_2,1)\quad \hbox{for a.e. } (x^{'},y_1,y_2)\in \omega\times Y^{'},\\
& \widehat{v}^j\;\; \hbox{is orthogonal to the rigid displacements},\; j=1,\ldots, m \Big\}. 
\end{aligned}
\end{equation*}
For any ${\bf V}\in \V$, as in Section \ref{S5}, we define $[{\bf V}]_{S^{j}}= (\widehat{v}^{j}- \widehat{v}^0)_{|S^{j}}, \;\; [{\bf V}_\nu]_{S^{j}}= (\widehat{v}^{j}- \widehat{v}^0)\cdot\nu_{|S^{j}}, \;\; [{\bf V}_\tau]_{S^{j}}= [{\bf V}]_{S^{j}}- [{\bf V}_\nu]_{S^{j}}$, $j=1,\ldots m$,  and
$$
\begin{aligned}
& [{\bf V}]_{S^{0}}(x^{'}, y )=\lim_{s\to 0, s>0}\Big( \widehat{v}^0\big(x^{'}, y+s\nu(y)\big)-\widehat{v}^0\big(x^{'}, y-s\nu(y)\big)\Big),\\
& [{\bf V}_\nu]_{S^{0}}(x^{'}, y )=\lim_{s\to 0, s>0}\Big( \widehat{v}^0\big(x^{'}, y+s\nu(y)\big)-\widehat{v}^0\big(x^{'}, y-s\nu(y)\big)\Big)\cdot\nu(y),\\
&[{\bf V}_\tau]_{S^{0}}(x^{'}, y )=[{\bf V}]_{S^{0}}(x^{'}, y )-[{\bf V}_\tau]_{S^{0}}(x^{'}, y )\qquad \hbox{for a.e. } (x^{'},y)\in \omega\times S^{0}.
\end{aligned}
$$
Now we introdce the closed convex set $\K$
\begin{equation*}
\begin{aligned}
\K= \Big\{ {{\bf V}} \in \V\; \; | \;\; &[{\bf V}_{\nu}]_{S^0} \leq g^0\;\; \hbox{in}\;\;  \omega\times S^0,\quad
 [{\bf V}_{\nu}]_{S^j} +s^j\cdot \nu\leq g^j  \;\;\hbox{in }\;\;  \omega\times S^j \hskip-3mm\phantom{\int}^{(}\footnotemark^{)},\\
 &\hbox{where} \;\; \forall y\in Y^{j}\;\; s^{j}(\cdot, y)=c^{j}+d^{j}\land (y-O^{j})\;\; \hbox{in } {\cal M}(\omega\, ;\,\R^3)\;\; j=1,\ldots, m \Big\}. 
\end{aligned}
\end{equation*}
\footnotetext{ {\it 
This condition means
$$\forall\phi\in {\cal C}_c(\omega\times S^j),\enskip\hbox{s.t.} \enskip \phi(x^{'},y)\geq 0\quad \hbox{on }\; \omega\times S^j,\qquad <\phi , s^j\cdot \nu>_{ {\cal C}_c(\omega\times S^j) , {\cal M}(\omega\times S^j)}\leq \int_{\omega\times S^j} \big(g^j-[{\bf V}_{\nu}]_{S^j}\big)\,\phi\, dx^{'}d\sigma(y),\quad j=1,\ldots,m.$$}}

\begin{theorem}
\label{homogenized}
 Assume  $f\in L^2(\Omega; \R^3)$ and   $f_{\e}^j$, $g_{\e}^j$, $G_{\e}^j$ satisfy the hypotheses of Subsection \ref{hpth} and also assume \eqref{Cond-fin}. Suppose that the following assumption holds: there exists a tensor $a^M$ such that, as $\e \to 0$, 
 \begin{equation}
  \label{tensor}
  \mathcal T_{\e} (a_{\e}^M) \to a^M \quad \text{ a.e. in } \omega \times Y.
\end{equation}
Let ${\bf u}_{\e}$ be the solution of Problem \eqref{wf}, there exist a subsequence -still denoted $\e$- and   ${\bf U}=(u^{b},u^{a},\widehat{u}^0, \ldots, \widehat{u}^m, a, b) \in \K$ such that ($j=1,\ldots,m$)
\begin{equation*}
\begin{aligned}
u_\e & \rightharpoonup u^{b}\quad \hbox{weakly in } H^1_\Gamma(\O^{b};\R^3),\\
u_\e(\cdot +\e {\bf e}_3) & \rightharpoonup u^{a}\quad \hbox{weakly in } H^1(\O^{a};\R^3),\\
{\cal T}_\e(u_\e) & \rightharpoonup \widehat{u}^0\quad \hbox{weakly in } L^2(\omega; H^1(Y^0;\R^3)),\\
\e {\cal T}_{\e}(e (u_{\e}) ) &\rightharpoonup e_{y} (\widehat{u}^0) \quad \hbox{weakly  in } L^2(\omega\times Y^0 ; \R^{3\times 3}),\\
{\cal T}_\e(u^{j}_\e-r^{j}_{\bf u_\e}) & \rightharpoonup \widehat{u}^{j}\quad \hbox{weakly in } L^2(\omega; H^1(Y^{j};\R^3)),\\
{\cal T}_\e(u^{j}_\e) & \rightharpoonup \widehat{u}^{j}+r^{j}\quad \hbox{weakly-* in } {\cal M}(\omega\times Y^{j};\R^3),\\
\e {\cal T}_{\e}(e(u^{j}_{\e})) &\rightharpoonup e_{y}(\widehat{u}^j)\quad \text {weakly  in  }L^2(\omega \times Y^{j}; \R^{3\times 3}),\\
a^{j}_{\bf u_\e} & \rightharpoonup a^{j} \quad \hbox{weakly-* in}\; \; {\cal M}(\omega ; \R^3),\\
b^{j}_{\bf u_\e} & \rightharpoonup b^{j} \quad \hbox{weakly-* in}\; \; {\cal M}(\omega ; \R^3)
\end{aligned}
\end{equation*}
where
$$\forall y\in Y^j,\qquad  r^{j}(\cdot, y)=a^{j}+b^{j}\land (y-O^{j})\;\; \hbox{in } {\cal M}(\omega\, ;\,\R^3),\quad j=1,\ldots,m.$$ 
The limit field ${\bf U}$  satisfies the following unfolded problem:
\begin{equation}
\begin{aligned}\label{inflim2}
&\int_{\O^{b}} a^{b} e( u^{b}) : e(v^{b}-u^{b})\,dx+\int_{\O^{a}} a^{a} e( u^{a}) : e(v^{a}-u^{a})\,dx +\sum_{j=0}^{m}  \int_{\omega\times Y^j} a^M e_y(\widehat u^j) : e_y(\widehat v^j - \widehat u^j) \,dx^{'} dy  \\
+& \sum_{j=0}^{m}<G^j , |[{\bf V}_{\tau}]_{S^{j}}+s^j_\tau| - |[{\bf U}_{\tau}]_{S^j}+r^j_\tau|>_{{\cal C}_c(\omega\times S^j) , {\cal M}(\omega\times S^j)}  \\
\geq &  \int_{\O^{b}} f \cdot (v^{b}-u^{b}) \,dx + \int_{\O^{a}} f \cdot (v^{a}-u^{a}) \,dx + \sum_{j = 1}^m \int_{\omega \times Y^j} F^j \cdot (\widehat v^j  - \widehat u^j ) \, dx^{'} dy\\
+ & \sum_{j = 1}^m <F^j , s^{j} - r^j>_{{\cal C}_c(\omega\times Y^j ; \R^3) , {\cal M}(\omega\times Y^j ; \R^3)},\qquad \forall {\bf V}\in \K
\end{aligned}\end{equation}
where 
$$s^0=r^0=0,\quad \hbox{and}\qquad \forall y\in Y^j,\quad  s^{j}(\cdot, y)=c^{j}+d^{j}\land (y-O^{j})\;\; \hbox{in } {\cal M}(\omega\, ;\,\R^3)$$ 
and where $s^j_\tau=s^j-(s^j\cdot \nu)\nu$, $r^j_\tau=r^j-(r^j\cdot \nu)\nu$, $j=1,\ldots,m$.
\end{theorem}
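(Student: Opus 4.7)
The plan is to combine the uniform estimate $\mathbf{M}(\mathbf{u}_\e)\le C$ from Proposition \ref{proposition5.2} (which holds under hypothesis \eqref{Cond-fin}) with the compactness of Theorem \ref{convergence} to extract a subsequence along which all the weak convergences in the statement hold, yielding a limit $\mathbf{U}=(u^b,u^a,\widehat u^0,\ldots,\widehat u^m,a,b)\in\V$; the interface matching \eqref{CondInterface} on the two faces of $Y^0$ is automatic. It remains to verify (i) that $\mathbf{U}\in\K$, and (ii) the variational inequality \eqref{inflim2}.

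For (i), I would transport the pointwise constraint $[(\mathbf{u}_\e)_\nu]_{S^j_\e}\le g^j_\e$ through the boundary-layer operator $\mathcal T^{bl,j}_\e$: since $\mathcal T^{bl,j}_\e(g^j_\e)=g^j$ by the ansatz, testing against nonnegative $\phi\in{\cal C}_c(\omega\times S^j)$ and using \eqref{furthermore} yields the measure-sense inequality $[\mathbf{U}_\nu]_{S^j}+r^j\cdot\nu\le g^j$ for $j=1,\ldots,m$, while the $j=0$ case follows from \eqref{Eq4.3}$_3$. For (ii), I would rewrite \eqref{wf} as
$$\mathbf{A}^\e(\mathbf{u}_\e,\mathbf{v}_\e)+\sum_{j=0}^m\Psi^j_\e([\mathbf{v}_{\e,\tau}]_{S^j_\e})\ge \mathbf{A}^\e(\mathbf{u}_\e,\mathbf{u}_\e)+\sum_{j=0}^m\Psi^j_\e([(\mathbf{u}_\e)_\tau]_{S^j_\e})+\int_{\Omega_\e}\mathbf{f}_\e\cdot(\mathbf{v}_\e-\mathbf{u}_\e)\,dx,$$
take $\liminf$ on the right (using weak lower semicontinuity of the quadratic form after unfolding and of the convex integrand $w\mapsto\int G^j|w|$), and $\lim$ on the left against a recovery sequence $\mathbf{v}_\e\in\K_\e$ tailored to a smooth strictly admissible $\mathbf{V}\in\K$. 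The scaling $a^\e=\e a^M_\e$ on the layer exactly compensates the $\e^{-1}$ factor in $\e\mathcal T_\e(e(\cdot))\to e_y(\cdot)$ from Proposition \ref{unfpr}, so with \eqref{tensor} one obtains
$$\mathbf{A}^\e(\mathbf{u}_\e,\mathbf{v}_\e)\to\int_{\Omega^b}a^b e(u^b):e(v^b)\,dx+\int_{\Omega^a}a^a e(u^a):e(v^a)\,dx+\sum_{j=0}^m\int_{\omega\times Y^j}a^M e_y(\widehat u^j):e_y(\widehat v^j)\,dx'dy;$$
the friction terms converge by combining $\mathcal T^{bl,j}_\e(G^j_\e)\to G^j$ with \eqref{furthermore}, and the forcing converges via the ansatz on $f^j_\e$.

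The recovery sequence would be built by prescribing $v^b$ on $\Omega^b$, $v^a(\cdot-\e\mathbf{e}_3)$ on $\Omega^a_\e$, a cell-wise interpolation $v^b(x',0)+\frac{x_3}{\e}(v^a(x',0)-v^b(x',0))+\widehat v^0(x',\{x/\e\}_Y)$ on $\widehat\Omega^0_\e$, and on each $\Omega^j_\e$ the rigid motion $a_\e^j(x')+b_\e^j(x')\wedge(x/\e-O^j)$ augmented by $\widehat v^j(x',\{x/\e\}_Y)$, with $(a_\e^j,b_\e^j)$ chosen as the $\e$-piecewise-constant averages of the smooth data $(c^j,d^j)$. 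The main obstacle is admissibility: the constraint on $\mathbf{V}\in\K$ holds only in the sense of measures (and involves the rigid-body part $s^j\cdot\nu$), whereas the level-$\e$ condition $[\mathbf{v}_{\e,\nu}]_{S^j_\e}\le g^j_\e$ must hold pointwise. I would resolve this by a density argument: first establish \eqref{inflim2} for $\mathbf{V}$ satisfying the strict inequality $[\mathbf{V}_\nu]_{S^j}+s^j\cdot\nu\le g^j-\delta$ for some $\delta>0$ (for which the cell-averaging of $(c^j,d^j)$ preserves $\e$-admissibility once $\e$ is small, by uniform continuity of the smooth data), and then pass $\delta\to 0^+$ using weak-$*$ lower semicontinuity of the dualities in \eqref{inflim2}. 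The proper bookkeeping of the coupling between the rigid-body projections $(a^j_{\mathbf{v}_\e},b^j_{\mathbf{v}_\e})$ and the gap $g^j_\e$ under $\mathcal T^{bl,j}_\e$ is what ultimately produces the corrector $s^j\cdot\nu$ in the limit constraint, and this is the most delicate piece of the argument.
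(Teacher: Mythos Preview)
Your strategy matches the paper's almost exactly: uniform bound from Proposition~\ref{proposition5.2}, compactness from Theorem~\ref{convergence}, rewriting of \eqref{wf} so that the $\mathbf u_\e$-side is handled by weak lower semicontinuity after unfolding while the $\mathbf v_\e$-side is computed against an explicit recovery sequence built from a smooth $\mathbf V\in\K$, followed by a density argument to reach all of $\K$.

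One correction to your recovery sequence: in the layer $\Omega^0_\e$ you must drop the linear interpolant $v^b(x',0)+\tfrac{x_3}{\e}\bigl(v^a(x',0)-v^b(x',0)\bigr)$. By the definition of $\V$ the field $\widehat v^0$ already carries the traces $\widehat v^0(x',\cdot,\cdot,0)=v^b(x',0)$ and $\widehat v^0(x',\cdot,\cdot,1)=v^a(x',0)$, so adding the interpolant doubles those traces and destroys the $H^1$-matching across $\Sigma$ and $S^a_\e$; the paper simply takes $v_\e=\widehat v^0(\,\cdot\,,\{x/\e\}_Y)$ there (see \eqref{Eq6.5}), which gives $\mathcal T_\e(v_\e)=\widehat v^0$ and $\e\mathcal T_\e(e(v_\e))=e_y(\widehat v^0)$ directly. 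Also, the piecewise-constant averaging of $(c^j,d^j)$ and the $\delta$-relaxation are unnecessary: for smooth $\mathbf V\in\K$ the condition $[\mathbf V_\nu]_{S^j}+s^j\cdot\nu\le g^j$ is an honest pointwise inequality of functions, and the construction \eqref{Eq6.5} reproduces it exactly at the $\e$-level, so that $\mathbf v_\e\in\K_\e$ without any further approximation. Your more cautious route via strict admissibility is valid but heavier than what the paper needs.
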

\begin{proof}
Based on the Proposition \ref{proposition5.2} and the assumptions of Section \ref{hpth}, the solution ${\bf u}_\e$ of Problem ${\cal P}_{\e}$ is uniformly bounded with respect to $\e$. Hence,  up to a subsequence of $\e$ (still denoted $\e$), the convergences of Theorem \ref{convergence} hold. Therefore, we should show that the limits furnished by the Theorem \ref{convergence} satisfy a homogenized limit problem. The main point now is to pass to the limit in \eqref{wf}.
\medskip

 Let ${{\bf V}}=(v^{b},v^{a},\widehat{v}^0, \ldots, \widehat{v}^m, c, d)$ be in 
$$\K\cap \ds H^1_\Gamma(\Omega^{b};\R^3)\times H^1(\Omega^{a};\R^3)\times {\cal C}^\infty_c(\omega; H^1_{per} (Y^0 ;\R^3 )) \times \prod_{j=1}^m {\cal C}^\infty_c(\omega; H^1(Y^j;\R^3)) \times  [{\cal C}^\infty_c(\omega;\R^3)]^m \times [{\cal C}^\infty_c(\omega;\R^3)]^m.$$ We  use the following test function ${\bf v}_\e\in {\K}_\e$:

\begin{equation}\label{Eq6.5}
{\bf v}_\e(x)=\left\{
\begin{aligned}
v_\e(x)=&v^{b}(x)\qquad &\hbox{in}\;\;  \O^{b},\\
v_\e(x)=&v^{a}(x-\e {\bf e}_3)\qquad &\hbox{in}\;\;  \O^{a}_\e,\\
v_\e(x)=&\widehat{v}^0\Big(\Big\{{x^{'}\over \e}\Big\},{x_3\over \e}\Big)\qquad &\hbox{in}\;\;  \O^0_\e,\\
v^{j}_\e(x)=&\widehat{v}^j\Big(\Big\{{x^{'}\over \e}\Big\},{x_3\over \e}\Big)+c^j(x^{'})+d^j(x^{'})\land \Big(\Big\{{x^{'}\over \e}\Big\}+{x_3\over \e}{\bf e}_3-O^j\Big)\qquad &\hbox{in}\;\;  \O^j_\e,\\
=&\widehat{v}^j\Big(\Big\{{x^{'}\over \e}\Big\},{x_3\over \e}\Big)+s^j_\e(x)\qquad &\hbox{in}\;\;  \O^j_\e.
\end{aligned}
\right.
\end{equation}
By construction we have
\begin{equation}\label{Eq6.7}
\begin{aligned}
&v_\e  = v^{b}\qquad  \hbox{in}\;\;  \O^{b},\hskip 12mm v_\e(\cdot +\e {\bf e}_3) = v^{a}\qquad \hbox{in}\;\;  \O^{a},\\
&{\cal T}_\e(v_\e) = \widehat{v}^0,\qquad  \e {\cal T}_{\e}(e (v_{\e}) ) = e_{y} (\widehat{v}^0)\qquad \hbox{in}\;\;  \omega \times Y^0,\\
&{\cal T}_\e(v^{j}_\e-s^{j}_\e)  =\widehat{v}^{j}, \qquad   \e {\cal T}_{\e}(e(v^{j}_{\e}))  =e_{y}(\widehat{v}^j)\qquad \hbox{in}\;\;  \omega \times Y^j,\\
&{\cal T}_\e(v^{j}_\e) \longrightarrow  \widehat{v}^{j}+s^{j}\quad \hbox{strongly in } \;\; L^2(\omega ; H^1( Y^j;\R^3)).
\end{aligned}
\end{equation}

We rewrite \eqref{wf} in the form
\begin{multline*}
 \e \sum_{j=1}^{m}\int_{\O_{\e}^j}a_{\e}^M e(u_{\e}^j) : e(v_\e^j)\,dx + \e\int_{\O_{\e}^0} a_{\e}^M e(u_{\e}) : e(v_\e)\,dx + \int_{\O^{b} } a^{b} e( u_{\e}) : e(v_\e)\,dx + \int_{\O^{a}} a^{a}(\cdot+\e{\bf  e}_3)e\big( u_{\e}(\cdot+\e {\bf e}_3)\big) : e(v^{a})\,dx \\
+ \sum_{j=0}^{m} \int_{S_{\e}^j} G_{\e}^j(x) |[({\bf v}_{\e})_\tau]_{S^{j}_{\e}}| d\sigma(x) - \int_{\O_{\e}} {\bf f}_{\e} \cdot {\bf v_\e} \,dx  \geq \e \sum_{j=1}^{m} \int_{\O_{\e}^j} a_{\e}^M e( u_{\e}^j) : e(u_{\e}^j)\,dx + \e\int_{\O_{\e}^0} a_{\e}^M e(u_{\e}) : e(u_{\e})\,dx \\
+ \int_{\O^{b} } a^{b} e( u_{\e}) : e(u_\e)\,dx + \int_{\O^{a}} a^{a} (\cdot+{\bf e}_3)e\big( u_{\e}(\cdot+\e {\bf e}_3) \big): e\big( u_{\e}(\cdot+\e {\bf e}_3)\big)\,dx + \sum_{j=0}^{m} \int_{S_{\e}^j} G_{\e}^j (x) |[({\bf u}_{\e})_{\tau}]_{S^{j}_{\e}}| d\sigma(x) - \int_{\O_{\e}} {\bf f}_{\e} \cdot {\bf u}_{\e} \,dx.
\end{multline*}
Therefore, by unfolding and due to the convergences in Theorem \ref{convergence} and the equalities and convergence  in \eqref{Eq6.7}, we have 
\begin{equation*}
\begin{aligned}
 &\lim_{\e \rightarrow 0} \Big(\e \sum_{j=1}^{m} \int_{\Omega_{\e}^j} a_{\e}^M e(u_{\e}^j): e(v^j_\e)\,dx +\e \int_{\Omega_{\e}^0} a_{\e}^M e(u_{\e}): e(v_\e)\,dx\Big)\\
  = &\lim_{\e \rightarrow 0} \Big(\e^2 \sum_{j=1}^{m} \int_{\omega \times Y^j} \mathcal T_{\e} (a^M_{\e}) \mathcal T_{\e} (e(u_{\e}^j)): \mathcal T_{\e} (e(v^j_\e))\,dx' dy+ \e^2  \int_{\omega \times Y^j} \mathcal T_{\e} (a^M_{\e}) \mathcal T_{\e} (e(u_{\e})): \mathcal T_{\e} (e(v_\e))\,dx' dy \Big)\\
=& \sum_{j=0}^{m} \int_{\omega \times Y^j} a^M e_y(\widehat u^j) : e_y (\widehat{v}^j) \, dx' \, dy,
\end{aligned}\end{equation*}
$$\sum_{j = 0}^m \int_{S_{\e}^j} G_{\e}^j  |({\bf v}_{\e})_\tau]_{S^{j}_{\e}}| d\sigma(x) = \sum_{j = 0}^m \int_{\omega \times S^j} \mathcal T_{\e}^{bl, j}(G_{\e}^j ) \mathcal T_{\e}^{bl, j}(|({\bf v}_{\e})_\tau]_{S^{j}_{\e}}|)dx^{'} d\sigma(y) = \sum_{j = 0}^m \int_{\omega \times S^j} G^j |[{\bf V}_{\tau}]_{S^j}+s^j_\tau|  dx'\,  d\sigma(y)$$ where $s^j_\tau=s^j-(s^j\cdot \nu)\nu$.
\medskip

By the lower semi-continuity with respect to weak (or weak-*) convergences, we first obtain
\begin{multline*}
 \liminf_{ \e\to 0} \sum_{j=0}^{m} \int_{\Omega_{\e}^j} \e a_{\e}^M e( u_{\e}^j): e(u_{\e}^j)\,dx = \liminf_{\e\to 0} \e^2 \sum_{j=0}^{m} \int_{\omega \times Y^j} \mathcal T_{\e} (a^M_{\e}) \mathcal T_{\e} (e( u_{\e}^j)): \mathcal T_{\e}(e(u_{\e}^j))\,dx' dy  \\
\geq \sum_{j=0}^{m} \int_{\omega \times Y^j} a^M e_y(\widehat u^j) : e_y(\widehat u^j) \,dx' dy,
\end{multline*}
then, unfolding the term corresponding to the Tresca friction,  passing to the limit and making use of lower semi-continuity gives
\begin{equation*}
  \liminf_{\e\to 0} \sum_{j = 0}^m \int_{S_{\e}^j} G_{\e}^j  |[({\bf u}_{\e})_{\tau}]_{S^{j}_{\e}}| d\sigma(x) \geq \sum_{j = 0}^m < G^j ,  |[{\bf U}_{\tau}]_{S^{j}}+r^j_\tau| >_{{\cal C}_c(\omega\times S^j) , {\cal M}(\omega\times S^j)}
\end{equation*} where $r^j_\tau=r^j-(r^j\cdot \nu)\nu$.
Considering the terms corresponding to the applied forces we get
\begin{gather*}
  \ds \lim_{\e \to 0}\int_{\O^{b} \cup \O^{a}_{\e}} f \cdot u_{\e} \,dx = \lim_{\e \to 0}\Big(\int_{\O^{b}} f \cdot u_{\e} \,dx + \int_{\O^{a}} f (\cdot + \e e_3) \cdot u_{\e} (\cdot + \e e_3) \,dx \Big)\\
  = \int_{\O^{b}} f \cdot u^{b} \,dx+\int_{\O^{a}} f \cdot u^{a} \,dx,\\
  \ds \lim_{\e \to 0} \int_{\Omega_{\e}^0} f \cdot u_{\e} \, dx = \lim_{\e \to 0}\e\int_{\omega \times Y^0} \mathcal T_{\e} (f)\cdot\mathcal T_{\e}(u_{\e})\, dx' \, dy = 0,\\
  \ds \lim_{\e \to 0} \sum_{j = 1}^m\int_{\O_{\e}^j} f_{\e}^j \cdot u_{\e}^j \, dx =  \lim_{\e\to 0} \e \sum_{j = 1}^m \int_{\omega \times Y^j} \mathcal T_{\e}(f_{\e}^j) \cdot \mathcal T_{\e} (u_{\e}^j) dx' \, dy \\
  =\sum_{j = 1}^m \int_{\omega \times Y^j} F^j \cdot \widehat u^j  \, dx' dy+ \sum_{j = 1}^m < F^j , r^j>_{{\cal C}_c({\omega\times Y^j};\R^3),{\cal M}(\omega\times Y^j;\R^3)}.
\end{gather*}
In a similar way
\begin{gather*}
  \ds \lim_{\e \to 0}\int_{\O^*_{\e}} f\cdot v_\e \,dx = \int_{\O^{b}} f \cdot v^{b} \,dx+\int_{\O^{a}} f \cdot v^{a} \,dx,\\
  \ds \lim_{\e \to 0} \sum_{j = 1}^m \int_{\O_{\e}^j} f_{\e}^j\cdot  v^j_\e \, dx = \sum_{j = 1}^m \int_{\omega \times Y^j} F^j \cdot (\widehat v^j + s^{j}) \, dx' dy.
\end{gather*}
Using the established convergences we obtain
\begin{equation*}
\begin{aligned}
&\int_{\O^{b}} a^{b} e( u^{b}) : e(v^{b})\,dx+\int_{\O^{a}} a^{a} e( u^{a}) : e(v^{a})\,dx+ \sum_{j=0}^{m} \int_{\omega \times Y^j} a^M e_y(\widehat u^j) : e_y (\widehat v^j) \, dx' \, dy\\
  + &\sum_{j = 0}^m  < G^j ,  |[{\bf V}_{\tau}]_{S^{j}}+s^j_\tau| >_{{\cal C}_c(\omega\times S^j) , {\cal M}(\omega\times S^j)}- \int_{\O^{b}} f \cdot v^{b} \,dx - \int_{\O^{a}} f \cdot v^{a} \,dx \\
  - &\sum_{j = 1}^m < F^j , \widehat v^j + s^{j}>_{{\cal C}_c(\omega\times S^j ; \R^3) , {\cal M}(\omega\times S^j ; \R^3)}\\
\geq&\int_{\O^{b}} a^{b} e( u^{b}) : e(u^{b})\,dx+\int_{\O^{a}} a^{a} e( u^{a}) : e(u^{a})\,dx+ \sum_{j=0}^{m} \int_{\omega \times Y^j} a^M e_y(\widehat u^j) : e_y(\widehat u^j) \,dx' dy \\
+& \sum_{j = 0}^m < G^j ,  |[{\bf U}_{\tau}]_{S^{j}}+r^j_\tau| >_{{\cal C}_c(\omega\times S^j) , {\cal M}(\omega\times S^j)}- \int_{\O^{b}}f \cdot u^{b} \,dx -\int_{\O^{a}}f \cdot u^{a} \,dx\\
 -& \sum_{j = 1}^m <F^j , \widehat u^j + r^j>_{{\cal C}_c(\omega\times Y^j; \R^3) , {\cal M}(\omega\times Y^j ; \R^3)}.
\end{aligned}\end{equation*}
Subtracting the terms on the right-hand side from the left-hand side we get \eqref{inflim2}. By a density argument, \eqref{inflim2} holds for any ${\bf V}\in \K$.
\end{proof}

\begin{remark}\label{rem61} Since the functional 
\begin{equation} \label{problem-lim}
\begin{aligned}
{\cal E} ({\bf V})\doteq  \frac{1}{2} \Big(&\int_{\O^{b}} a^{b} e( v^{b}) : e(v^{b})\,dx+\int_{\O^{a}} a^{a} e( v^{a}) : e(v^{a})\,dx+ \sum_{j=0}^{m} \int_{\omega \times Y^j} a^M e_y(\widehat v^j) : e_y (\widehat v^j) \, dx' \, dy\Big)\\
 + &\sum_{j = 0}^m  < G^j ,  |[{\bf V}_{\tau}]_{S^{j}}+s^j_\tau| >_{{\cal C}_c(\omega\times S^j) , {\cal M}(\omega\times S^j)}- \int_{\O^{b}} f \cdot v^{b} \,dx - \int_{\O^{a}} f \cdot v^{a} \,dx \\
  - &\sum_{j = 1}^m < F^j , \widehat v^j + s^{j}>_{{\cal C}_c(\omega\times Y^j ; \R^3) , {\cal M}(\omega\times Y^j ; \R^3)},\quad {{\bf V}}=(v^{b},v^{a},\widehat{v}^0, \ldots, \widehat{v}^m, c, d)\in \K,\\
 & \forall y\in Y^j,\quad  s^{j}(\cdot, y)=c^{j}+d^{j}\land (y-O^{j})\;\; \hbox{in } {\cal M}(\omega\, ;\,\R^3),\quad s^j_\tau=s^j-(s^j\cdot \nu)\nu
\end{aligned}
\end{equation}
is weakly lower semi-continuous and convex over $\K$, Problem \eqref{inflim2} is equivalent to find a minimizer over $\K$ of the functional ${\cal E}$. The field ${\bf U}$ obtained in Theorem \ref{homogenized} is a global minimizer of this functional over $\K$ and every limit point of the sequence $\{{\bf u}_\e\}$ is a global minimizer of ${\cal E}$.
\end{remark}

\begin{lemma} For every $M\geq 0$ there exists a constant $C(M)$ such that for any ${\bf V}\in \K$ satisfying $\ds {\cal E}({\bf V})\leq M$ we have
\begin{equation}\label{EQ67}
\|v^a\|_{H^1(\Omega^a;\R^3)}+\|v^b\|_{H^1(\Omega^b;\R^3)}+\sum_{j=0}^m \|\widehat v^j\|_{L^2(\omega; H^1(Y^j;\R^3))}+\|c\|_{[{\cal M}(\omega;\R^3)]^m }+\|d\|_{[{\cal M}(\omega;\R^3)]^m }\le C(M).
\end{equation}
\end{lemma}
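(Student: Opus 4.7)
The proof follows the scheme of Proposition~\ref{proposition5.2}, transposed to the limit problem. Set
$$\Xi^2\doteq \|e(v^b)\|^2_{L^2(\O^b)}+\|e(v^a)\|^2_{L^2(\O^a)}+\sum_{j=0}^m\|e_y(\widehat v^j)\|^2_{L^2(\omega\times Y^j)},\qquad T^j\doteq\langle G^j,\,|[{\bf V}_\tau]_{S^j}+s^j_\tau|\rangle.$$
Using the coercivity of $a^b,a^a,a^M$ and the non-negativity of the $T^j$, the hypothesis ${\cal E}({\bf V})\leq M$ yields
$$\frac{\overline\alpha}{2}\Xi^2+\sum_{j=0}^m T^j\leq M+\Big|\int_{\O^b}f\cdot v^b\,dx+\int_{\O^a}f\cdot v^a\,dx+\sum_{j=1}^m\langle F^j,\widehat v^j+s^j\rangle\Big|.$$
The strategy is to bound every norm in \eqref{EQ67} by $\Xi$ and by the $T^j/M^j$, substitute into the right-hand side, and close the estimate via Young's inequality and assumption \eqref{Cond-fin}.

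For the fields $v^b,v^a,\widehat v^0,\widehat v^j$ I apply Korn-type inequalities. The Dirichlet condition on $\Gamma$ gives $\|v^b\|_{H^1(\O^b)}\leq C\Xi$. For $j\geq 1$, the orthogonality of $\widehat v^j(x',\cdot)$ to rigid motions together with Korn--Wirtinger on $Y^j$ yields $\|\widehat v^j\|_{L^2(\omega;H^1(Y^j))}\leq C\Xi$. For $\widehat v^0$ I exploit the trace identity $\widehat v^0|_{y_3=0}=v^b(\cdot,0)$ from \eqref{CondInterface}: since the only $y'$-periodic rigid motions in $Y$ are constants in $y$, subtracting $v^b(x',0)$ produces a field with vanishing trace on $Y'\times\{0\}$, and a Korn inequality on $Y^0$ (periodicity on the lateral faces, Dirichlet on the bottom face) yields $\|\widehat v^0-v^b(\cdot,0)\|_{L^2(\omega;H^1(Y^0))}\leq C\Xi$, hence $\|\widehat v^0\|_{L^2(\omega;H^1(Y^0))}\leq C\Xi$. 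Finally, Korn--Wirtinger on $\O^a$ pinned by the trace $v^a(\cdot,0)=\widehat v^0|_{y_3=1}$ gives $\|v^a\|_{H^1(\O^a)}\leq C(\|e(v^a)\|_{L^2}+\|v^a(\cdot,0)\|_{L^2(\omega)})\leq C\Xi$.

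The main step is bounding $c^j,d^j$. For each $x'$ the field $y\mapsto s^j(x',y)=c^j(x')+d^j(x')\wedge(y-O^j)$ is a rigid motion on $Y^j$, so $e_y(s^j)=0$ and Proposition~\ref{newKorn} (read in a measure-theoretic sense via duality with $\mathcal C_c(\omega\times S^j)$) yields
$$\|c^j\|_{\mathcal M(\omega;\R^3)}+\|d^j\|_{\mathcal M(\omega;\R^3)}\leq C\big(\|(s^j\cdot\nu)^+\|_{\mathcal M(\omega\times S^j)}+\|s^j_\tau\|_{\mathcal M(\omega\times S^j)}\big).$$
The unilateral constraint $s^j\cdot\nu\leq g^j-[{\bf V}_\nu]_{S^j}$ in $\K$ and the trace theorem applied to $\widehat v^0,\widehat v^j$ give $\|(s^j\cdot\nu)^+\|_{\mathcal M}\leq\|g^j\|_{L^1}+\|[{\bf V}_\nu]\|_{L^1}\leq C+C\Xi$; the triangle inequality together with $G^j\geq M^j$ gives $\|s^j_\tau\|_{\mathcal M}\leq(M^j)^{-1}T^j+\|[{\bf V}_\tau]\|_{L^1}\leq(M^j)^{-1}T^j+C\Xi$. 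Substituting into the energy inequality and using $|\int f\cdot v^\star|\leq C\Xi$, $|\langle F^j,\widehat v^j\rangle|\leq C\Xi$, $|\langle F^j,s^j\rangle|\leq C\|F^j\|_\infty(\|c^j\|+\|d^j\|)$, one arrives at
$$\frac{\overline\alpha}{2}\Xi^2+\sum_j T^j\leq C(M)+C\Xi+\sum_j\frac{C\|F^j\|_\infty}{M^j}T^j.$$
Young's inequality absorbs $C\Xi$ into $\Xi^2$, and assumption \eqref{Cond-fin} guarantees that the coefficients $C\|F^j\|_\infty/M^j$ are strictly smaller than $1$, so the last sum is absorbed into $\sum_jT^j$. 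This yields $\Xi+\sum_j T^j\leq C(M)$, and together with the Korn bounds above it proves \eqref{EQ67}. The principal obstacle is the measure-theoretic form of Proposition~\ref{newKorn}: since $c^j,d^j$ are only Radon measures on $\omega$, the pointwise-in-$x'$ unilateral Korn inequality must be read through a dual pairing with continuous test functions (for instance, by approximating $c^j,d^j$ by smooth densities and passing to the limit).
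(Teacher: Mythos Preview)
Your proof is correct and follows essentially the same route as the paper: both start from the coercivity bound on ${\cal E}({\bf V})\leq M$, use Korn-type inequalities (together with the interface identities \eqref{CondInterface} and the orthogonality of $\widehat v^j$ to rigid motions) to control $v^a,v^b,\widehat v^j$ by the strain energy, invoke the unilateral Korn inequality to bound $c^j,d^j$ via $\|(s^j_\nu)^+\|+\|s^j_\tau\|$, exploit the constraint in $\K$ and the lower bound $G^j\ge M^j$, and close the estimate using \eqref{Cond-fin}; the passage from $L^1$ to measures is handled in both by regularization in $x'$. Your write-up is in fact more explicit than the paper's about the chain of Korn inequalities for $\widehat v^0$ and $v^a$ and about the absorption step, but the argument is the same.
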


\begin{proof} From the expression \eqref{problem-lim} of ${\cal E}({\bf V})$, we first deduce that  $\ds {\cal E}({\bf V})\leq M$ implies
\begin{equation}\label{EQ68}
\begin{aligned}
{\overline\alpha\over 2}\Big(\|e(v^a)\|^2_{L^2(\Omega^a;\R^6)}+\|e(v^b)\|^2_{L^2(\Omega^b;\R^6)}+\sum_{j=0}^m\|e(\widehat e_y(v^j))\|^2_{L^2(\omega\times Y^j;\R^6)}\Big)\\`
+\sum_{j=0}^m M^j  \|[{\bf V}_{\tau}]_{S^{j}}+s^j_\tau \|_{{\cal M}(\omega\times S^j;\R^3)}-\|f\|_{L^2(\Omega^a;\R^3)}\| v^a\|_{L^2(\Omega^a;\R^3)}-\|f\|_{L^2(\Omega^b;\R^3)}\| v^b\|_{L^2(\Omega^b;\R^3)}\\
-\sum_{j=1}^m \|F^j\|_{L^2(\omega\times Y^j;\R^3)}\|\widehat v^j\|_{L^2(\omega\times Y^j;\R^3)} - \sum_{j = 1}^m \| F^j \|_{{\cal C}_c(\omega\times Y^j ; \R^3) }\| s^{j}\|_{{\cal M}(\omega\times Y^j ; \R^3)}\leq M.
\end{aligned}
\end{equation}
From the Korn inequality and the trace theorem we get
$$
\begin{aligned}
&\|v^a\|_{H^1(\Omega^a;\R^3)}+\|v^b\|_{H^1(\Omega^b;\R^3)}+\sum_{j=0}^m \|\widehat v^j\|_{L^2(\omega; H^1(Y^j;\R^3))}\\
\leq &C\Big(\|e(v^a)\|^2_{L^2(\Omega^a;\R^6)}+\|e(v^b)\|^2_{L^2(\Omega^b;\R^6)}+\sum_{j=0}^m\|e(\widehat e_y(v^j))\|^2_{L^2(\omega\times Y^j;\R^6)}\Big)
\end{aligned}
$$ where constant $C$ depends on the geometry of the sets $\Omega^a$, $\Omega^b$, $\omega$, $S^0$ and $Y^j$ ($j=1,\ldots, m$). 
\smallskip

Then, the above inequality and  \eqref{EQ68} yield
\begin{equation*}
\begin{aligned}
{\overline\beta}\big(\|v^a\|_{H^1(\Omega^a;\R^3)}+\|v^b\|_{H^1(\Omega^b;\R^3)}+\sum_{j=0}^m \|\widehat v^j\|_{L^2(\omega; H^1(Y^j;\R^3))}\big)+\sum_{j=0}^m M^j  \|s^j_\tau \|_{{\cal M}(\omega\times S^j;\R^3)}\\
- \sum_{j = 1}^m \| F^j \|_{{\cal C}_c(\omega\times Y^j ; \R^3) }\| s^{j}\|_{{\cal M}(\omega\times Y^j ; \R^3)}\leq M
\end{aligned}\end{equation*}
where $\overline \beta>0$.
\smallskip

\noindent Now, in the case  $c\in [L^1(\omega;\R^3)]^m$ and $d\in [L^1(\omega;\R^3)]^m$, from Lemma 4.4 of \cite{cior3}, there exist constants $C^{'}$ and $C^{''}$ which depend on $Y^j$ ($j=1,\ldots, m$) such that
$$\|c\|_{[L^1(\omega;\R^3)]^m}+\|d\|_{[L^1(\omega;\R^3)]^m}\leq C^{'}\sum_{j=1}^m  \|s^j\|_{L^1(\omega\times S^j;\R^3)}\leq C^{''}\sum_{j=1}^m \big( \|s^j_\tau \|_{L^1(\omega\times S^j;\R^3)}+\|(s^j_\nu)^+ \|_{L^1(\omega\times S^j)}\big).$$ Due to the definition of $\K$ we have $\|(s^j_\nu)^+ \|_{L^1(\omega\times S^j)}\le \|g^j\|_{L^1(S^j)}+\|[V_\nu]\|_{L^1(\omega\times S^j)}$. Proceeding as in the proofs of Propositions \ref{smallinclusions} and \ref{proposition5.2} and thanks to the condition \eqref{Cond-fin} and the above inequalities, we deduce the existence of a constant $C(M)$ such that  (we recall that $\|s^j \|_{{\cal M}(\omega\times S^j;\R^3)}=\|s^j\|_{L^1(\omega\times S^j;\R^3)}$)
$$\|v^a\|_{H^1(\Omega^a;\R^3)}+\|v^b\|_{H^1(\Omega^b;\R^3)}+\sum_{j=0}^m \|\widehat v^j\|_{L^2(\omega; H^1(Y^j;\R^3))}+\|c\|_{[L^1(\omega;\R^3)]^m }+\|d\|_{[L^1(\omega;\R^3)]^m }\le C(M).$$
Then, for  general elements ${\bf V}\in \K$, using regularization by convolution in $x'$ and this last estimate we obtain \eqref{EQ67}.
\end{proof}
Independently of Remark \ref{rem61}, using the above lemma we can easily prove the existence of at least one global minimizer for the functional ${\cal E}$.

\begin{remark} Let ${\bf U}$ and ${\bf U}^{'}$ be two global minimizers of the functional ${\cal E}$, from \eqref{inflim2}, we deduce that
$$u^a=u^{'a},\quad u^b=u^{'b},\quad \widehat u^j=\widehat u^{'j},\quad j=0,\ldots, m$$ and 
$$
\begin{aligned}
&\sum_{j=0}^{m}<G^j , |[{\bf U}_{\tau}]_{S^{j}}+r^j_\tau| >_{{\cal C}_c(\omega\times S^j) , {\cal M}(\omega\times S^j)}-\sum_{j = 1}^m <F^j , r^{j} >_{{\cal C}_c(\omega\times Y^j ; \R^3) , {\cal M}(\omega\times Y^j ; \R^3)}\\
=&
\sum_{j=0}^{m}<G^j , |[{\bf U}_{\tau}]_{S^j}+r^{'j}_\tau|>_{{\cal C}_c(\omega\times S^j) , {\cal M}(\omega\times S^j)}-\sum_{j = 1}^m <F^j ,  r^{'j}>_{{\cal C}_c(\omega\times Y^j ; \R^3) , {\cal M}(\omega\times Y^j ; \R^3)}.
\end{aligned}
$$
\end{remark}

\begin{lemma} Let ${\bf u}_\e$ be a minimizer of ${\cal P}_\e$ over $\K_\e$, 
$$m_\e=\min_{{\bf v}\in \K_\e}{\cal E}_\e({\bf v})={\cal E_{\e}}({\bf u}_\e).$$ The whole sequence $\{m_\e\}$ converges and we have
$$m=\min_{{\bf V}\in \K}{\cal E}({\bf V})={\cal E}({\bf U})=\lim_{\e\to 0} m_\e=\lim_{\e\to 0} {\cal E}_\e({\bf u}_\e)\leq 0.$$
\end{lemma}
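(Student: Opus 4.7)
The plan is to prove the four-fold equality by sandwiching $m_\e$ between a recovery-sequence \emph{limsup} bound and a lower-semicontinuity \emph{liminf} bound, both anchored at the value $m={\cal E}({\bf U})$, and then deducing convergence of the whole sequence from a standard subsequence-of-subsequence argument.

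\textbf{Upper bound $m\le 0$ and $m_\e\le 0$.} Since the zero field lies in $\K_\e$ (all the gaps $g^j_\e$ are non-negative, so $[{\bf 0}_\nu]_{S^j_\e}\le g^j_\e$) and in $\K$, and ${\cal E}_\e({\bf 0})={\cal E}({\bf 0})=0$, the trivial inequalities $m_\e\le 0$ and $m\le 0$ follow immediately. This also gives $\limsup m_\e\le 0$.

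\textbf{Liminf inequality.} Take any subsequence of $\{\e\}$; extract a further subsequence along which $m_\e$ converges in $[-\infty,0]$ and along which the convergences of Theorem \ref{convergence} hold for ${\bf u}_\e$, yielding some limit field ${\bf U}'\in\K$. Applying term by term the lower-semicontinuity arguments already written out in the proof of Theorem \ref{homogenized} (weak $L^2$-lsc for the elastic energies of $v^b$, $v^a$ and of $e_y(\widehat u^j)$; weak-$*$ lsc of the total variation for the Tresca terms via $\mathcal T^{bl,j}_\e$; and continuity of the linear loading terms) one obtains
$$\liminf_{\e\to 0}\,{\cal E}_\e({\bf u}_\e)\ge {\cal E}({\bf U}')\ge m.$$

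\textbf{Limsup inequality by recovery sequence.} Let ${\bf V}^*\in\K$ be any global minimizer, so ${\cal E}({\bf V}^*)=m$; its existence is guaranteed by the preceding lemma (combined with Remark \ref{rem61}). For smooth ${\bf V}\in\K$ one builds the admissible test field ${\bf v}_\e\in\K_\e$ exactly by formula \eqref{Eq6.5}. Admissibility is the key point: by construction the jump on $S^{j}_\e$ equals $\big(\widehat v^j+s^j-\widehat v^0\big)(\{x/\e\}_Y,\dots)$ at the normal direction, which is $\le g^j(\{x/\e\}_Y)=g^j_\e(x)$ precisely because ${\bf V}\in\K$. Using the equalities and strong convergences collected in \eqref{Eq6.7}, together with the assumption \eqref{tensor} on $a^M_\e$ and the hypotheses of Subsection \ref{hpth} on $f^j_\e$ and $G^j_\e$, one passes to the limit in every quadratic and linear term of ${\cal E}_\e({\bf v}_\e)$ and obtains ${\cal E}_\e({\bf v}_\e)\to {\cal E}({\bf V})$. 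Hence $\limsup m_\e\le {\cal E}({\bf V})$. A density step (the regularization by convolution in $x'$ inside $\K$, exactly the one used at the end of the preceding lemma) extends this to ${\bf V}={\bf V}^*$, yielding $\limsup m_\e\le m$.

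\textbf{Conclusion and whole-sequence convergence.} Combining gives $m\le {\cal E}({\bf U})\le \liminf m_\e\le \limsup m_\e\le m$, so all quantities coincide, ${\bf U}$ is a global minimizer of ${\cal E}$, and $\lim m_\e=m$. Since every subsequence of $\{m_\e\}$ contains a further subsequence converging to the same limit $m$, the whole sequence converges.

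\textbf{Main obstacle.} The delicate step is the recovery sequence: one must both preserve the unilateral constraint $[{\bf v}_{\e,\nu}]_{S^j_\e}\le g^j_\e$ after discretizing via the unit-cell periodic extension \eqref{Eq6.5}, and approximate an arbitrary ${\bf V}^*\in\K$ (whose components $c,d$ are only measures) by smooth elements of $\K$ \emph{without} violating non-penetration. The convolution-in-$x'$ argument supplied by the preceding lemma is the tool that handles both issues simultaneously, which is why that lemma is stated just before.
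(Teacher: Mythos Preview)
Your proposal is correct and follows essentially the same route as the paper: a liminf bound via the lower-semicontinuity arguments already developed in the proof of Theorem~\ref{homogenized}, a limsup bound via the explicit recovery fields \eqref{Eq6.5} together with the density-by-convolution step, and then the sandwich $m\le\liminf m_\e\le\limsup m_\e\le m$. Your presentation is slightly more explicit than the paper's (you spell out the subsequence-of-subsequence argument and the trivial bound from the zero field), but the substance is identical.
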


\begin{proof} As a consequence of Theorem \ref{homogenized} we have (with the subsequence of $\e$ introduced in this theorem)
$$m={\cal E}({\bf U})\leq \liminf_{\e\to 0} m_\e=\liminf_{\e\to 0} {\cal E}_\e({\bf u}_\e).$$
Now, let ${\bf V}$ be in $\K$ and let $\{{\bf V}^\eta\}_\eta$ be a sequence of fields in 
$$\K\cap \ds H^1_\Gamma(\Omega^{b};\R^3)\times H^1(\Omega^{a};\R^3)\times {\cal C}^\infty_c(\omega; H^1_{per} (Y^0 ;\R^3 )) \times \prod_{j=1}^m {\cal C}^\infty_c(\omega; H^1(Y^j;\R^3)) \times  [{\cal C}^\infty_c(\omega;\R^3)]^m \times [{\cal C}^\infty_c(\omega;\R^3)]^m$$ strongly converging to ${\bf V}$ in $\V$ (that is possible using regularization by convolution in $x'$). We fix $\eta$ and  for every ${\bf V}^\eta$  we consider the test field belonging to $\K_\e$ defined by \eqref{Eq6.5} and here denoted ${\bf v^\eta_\e}$. Using the subsequence of $\e$ introduced in Theorem \ref{homogenized},  we can  pass to the limit ($\e\to 0$). Since $m_\e\leq {\cal E_{\e}}({\bf v^\eta_\e})$,  due to \eqref{Eq6.7} we obtain
$$\limsup_{\e \to 0}m_\e\leq \limsup_{\e \to 0}{\cal E_{\e}}({\bf v^\eta_\e})= \lim_{\e \to 0}{\cal E_{\e}}({\bf v^\eta_\e})={\cal E}({\bf V}^\eta).$$
Now $\eta$ goes to $0$; that gives
$$\forall {\bf V}\in \K,\qquad \limsup_{\e \to 0}m_\e \leq {\cal E}({\bf V}).$$
Then
$$ \limsup_{\e \to 0}m_\e \leq {\cal E}({\bf U})\leq \min_{{\bf V}\in \K}{\cal E}({\bf V}).$$
Finally we get $\ds m=\lim_{\e\to 0} m_\e$ and this result holds for the whole sequence $\{\e\}$.
\end{proof}

\subsection{Computation of the effective outer--plane properties for a case of heterogeneous layer without contact}
Solving \eqref{inflim2} numerically is difficult because of the presence of non-differentiable non-linear term $|[{\bf U}_{\tau}]_{S^{j}}|$. Also a complete scale separation is impossible for a non--linear problem. 

In this subsection due to application purposes we want to consider a case without contact. Thus, we assume that the open sets $Y^j$, $j=1,\ldots, m$, are holes. For this case we can give a linear problem whose solution is the couple $(u^a, u^b)$ with Robin-type condition on the interface. The result obtained will be similar to \cite{cior, nrj, geym}.
\smallskip

The space $\V$ is replaced by
\begin{equation*}
\begin{aligned}
\V^{'}= \Big\{ &{{\bf V}}=(v^b, v^a, \widehat{v}^0) \in \H^{'}\; \; | \;\; v^{b}(x^{'}, 0)=\widehat{v}^0(x^{'}, y_1,y_2,0),\enskip v^{a}(x^{'}, 0)=\widehat{v}^0(x^{'}, y_1,y_2,1)\quad \hbox{for a.e. } (x^{'},y_1,y_2)\in \omega\times Y^{'} \Big\}
\end{aligned}
\end{equation*}
where $\H^{'}=\ds H^1_\Gamma(\Omega^{b};\R^3)\times H^1(\Omega^{a};\R^3)\times L^2(\omega; H^1_{per} (Y^0 ;\R^3 ))$. We endowed this Hilbert space with the norm 
$$
\|{{\bf V}}\|_{\V^{'}}=\|v^{b}\|_{H^1(\O^{b};\R^3)}+\|v^{a}\|_{H^1(\O^{a};\R^3)}+\|\widehat{v}^0\|_{L^2(\omega; H^1(Y^0;\R^3))}.
$$
Now, the closed convex set $\K$ is replaced by the space  $\V^{'}$. Since for any ${\bf V}\in \V^{'}$, $-{\bf V}$ also belongs to $\V^{'}$,  the problem \eqref{inflim2} becomes
\begin{equation}
\begin{aligned}\label{reg2}
&\hbox{ Find}\;\; {\bf U}=(u^b, u^a, \widehat{u}^0)\in \V^{'}\;\hbox{s. t. }\\
&\int_{\O^{b}} a^{b} e( u^{b}) : e(v^{b})\,dx+\int_{\O^{a}} a^{a} e( u^{a}) : e(v^{a})\,dx + \int_{\omega\times Y^0} a^M e_y(\widehat u^0) : e_y(\widehat v^0) \,dx^{'} dy  \\
= &  \int_{\O^{b}} f \cdot v^{b} \,dx + \int_{\O^{a}} f \cdot v^{a} \,dx ,\qquad \forall {\bf V}=(v^b, v^a, \widehat{v}^0) \in \V^{'}
\end{aligned}\end{equation}
Introduce the Hilbert space
$$\widehat V_0 =  \Big\{ \widehat{w} \in H^1_{per} (Y ^0;\R^3) \quad | \quad \widehat{w}(y_1, y_2, 0) =\widehat{w}(y_1, y_2, 1) = 0\;\; \hbox{for a.e. } (y_1,y_2)\in Y^{'} \Big\}.$$ 
We consider the 3 corrector displacements $\widehat{\chi}^{i}\in L^\infty(\omega ; H^1_{per}(Y^0;\R^3))$, $i=1,2,3$, defined by 
\begin{equation}
\label{cell-pr}
 \left\{
\begin{aligned}
& \widehat{\chi}^{i}(x^{'},y_1,y_2,1)={\bf e}_i,\quad \widehat{\chi}^{i}(x^{'}y_1,y_2,0)= 0,\qquad \hbox{for a.e. } (x^{'},y_1,y_2)\in \omega\times Y^{'}\\
&\int_{Y^{0}} a^M e_y (\widehat{\chi}^{i}) : e_y (\psi) \, dy = 0 \qquad \hbox{for a.e. } x^{'}\in \omega,\;\; \forall \psi\in \widehat{V}_0,
\end{aligned}
\right.
\end{equation}
The displacements ${\bf e}_i-\widehat{\chi}^{i}\in L^\infty(\omega; H^1_{per}(Y^{0};\R^3))$, $i=1,2,3$, satisfiy
\begin{equation*}
 \left\{
\begin{aligned}
&({\bf e}_i- \widehat{\chi}^{i})(x^{'},y_1,y_2,1)=0,\quad ({\bf e}_i- \widehat{\chi}^{i})(x^{'},y_1,y_2,0)={\bf e}_i,\qquad \hbox{for a.e. } (x^{'},y_1,y_2)\in \omega\times Y^{'}\\
&\int_{Y^0} a^M e_y ({\bf e}_i- \widehat{\chi}^{i}) : e_y (\psi) \, dy = 0 \qquad\hbox{for a.e. } x^{'}\in \omega,\;\;  \forall \psi\in \widehat{V}_0.
\end{aligned}
\right.
\end{equation*}
Below we give the variational problem satisfied by $(u^a, u^b)$.
\begin{theorem} Let ${\bf U}=(u^b, u^a, \widehat u^0)$  be the solution of \eqref{reg2}. We have
\begin{equation}\label{wu0}
\widehat{u}^0(x^{'}, y)=\sum_{i=1}^3 \widehat{\chi}^{i}(x^{'}, y) u^a_{i|\Sigma}(x^{'})+\sum_{i=1}^3 ({\bf e}_i- \widehat{\chi}^{i}(x^{'}, y)) u^b_{i|\Sigma}(x^{'})\qquad \hbox{for a.e. } (x^{'},y)\in \omega\times Y^{0}
\end{equation} and the couple $(u^a, u^b)$ is the solution of the following variational problem:
\begin{equation}\label{inflim2-bis}
 \left\{
\begin{aligned}
&\text{Find } (u^a, u^b)\in H^1(\O^a;\R^3)\times H^1_\Gamma(\O^b;\R^3)\;\;  \text{ s.t. }\\
&\int_{\O^{b}} a^{b} e( u^{b}) : e(v^{b})\,dx+\int_{\O^{a}} a^{a} e( u^{a}) : e(v^{a})\,dx + \int_{\omega} H (u^a_{|\Sigma} - u^b_{|\Sigma}) \cdot (v^a_{|\Sigma} - v^b_{|\Sigma})\,dx^{'} \\
= &\int_{\O^{b}} f \cdot v^{b} \,dx + \int_{\O^{a}} f \cdot v^{a} \,dx,\\
&\forall (v^a, v^b)\in H^1(\O^a;\R^3)\times H^1_\Gamma(\O^b;\R^3),
\end{aligned}
\right.
\end{equation}
where $H$ is the $3\times 3$ symmetric matrix with coefficients in $L^\infty(\omega)$ defined by
\begin{equation}\label{H}
H_{i j}   := \int_{Y^0}  a^M  e_y (\widehat \chi^i) : e_y (\widehat \chi^j) \, dy, \quad i, j = 1,2,3.
\end{equation}
Matrix $H$ is the homogenized tensor of the effective outer-plane stiffness and the $\widehat{\chi}^j$'s ($j=1,2,3$) are the solution of the cell-problem \eqref{cell-pr}. 
\end{theorem}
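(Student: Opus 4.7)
The plan is to first identify $\widehat u^0$ cell by cell via the correctors, and then substitute the resulting expression back into \eqref{reg2} to reduce everything to a problem posed on $(u^a,u^b)$ alone.

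First, I would test \eqref{reg2} against triples of the form $(0,0,\psi)$ with $\psi\in L^2(\omega;\widehat V_0)$. Since any such $\psi$ vanishes on $y_3=0$ and $y_3=1$, the interface constraints defining $\V'$ are trivially satisfied with $v^a=v^b=0$, so this is admissible. The two elasticity integrals drop out and only the cell integral remains, yielding the identity
\begin{equation*}
\int_{\omega\times Y^0} a^M\, e_y(\widehat u^0):e_y(\psi)\,dx'dy=0,\qquad\forall\,\psi\in L^2(\omega;\widehat V_0).
\end{equation*}
This is the cell equation for $\widehat u^0$ complementing the coupling conditions $\widehat u^0(x',y_1,y_2,0)=u^b(x',0)$ and $\widehat u^0(x',y_1,y_2,1)=u^a(x',0)$ built into $\V'$.

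Next I would prove \eqref{wu0} by uniqueness. Setting
\begin{equation*}
\widehat z(x',y):=\widehat u^0(x',y)-\sum_{i=1}^3\widehat\chi^i(x',y)\,u^a_{i|\Sigma}(x')-\sum_{i=1}^3\big({\bf e}_i-\widehat\chi^i(x',y)\big)\,u^b_{i|\Sigma}(x'),
\end{equation*}
the boundary data $\widehat\chi^i(\cdot,\cdot,0)=0$ and $\widehat\chi^i(\cdot,\cdot,1)={\bf e}_i$ give $\widehat z(\cdot,\cdot,0)=u^b_{|\Sigma}-u^b_{|\Sigma}=0$ and $\widehat z(\cdot,\cdot,1)=u^a_{|\Sigma}-u^a_{|\Sigma}=0$, so $\widehat z\in L^2(\omega;\widehat V_0)$. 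Because $e_y({\bf e}_i)=0$, the combination of the cell equation for $\widehat u^0$ and the cell-problem equations \eqref{cell-pr} for each $\widehat\chi^i$ produces
\begin{equation*}
\int_{\omega\times Y^0} a^M\, e_y(\widehat z):e_y(\psi)\,dx'dy=0,\qquad\forall\,\psi\in L^2(\omega;\widehat V_0).
\end{equation*}
Choosing $\psi=\widehat z$ and using the coercivity \eqref{coer} of $a^M$ forces $e_y(\widehat z)=0$ a.e.; combined with the vanishing trace on the top and bottom faces of $Y^0$, this yields $\widehat z\equiv 0$, which is \eqref{wu0}.

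Finally, for a general test field ${\bf V}=(v^b,v^a,\widehat v^0)\in\V'$ I would introduce
\begin{equation*}
\widehat v^0_\chi:=\sum_{i=1}^3\widehat\chi^i\,v^a_{i|\Sigma}+\sum_{i=1}^3({\bf e}_i-\widehat\chi^i)\,v^b_{i|\Sigma},
\end{equation*}
which carries the same traces as $\widehat v^0$; hence $\widehat v^0-\widehat v^0_\chi\in L^2(\omega;\widehat V_0)$ and by the cell equation for $\widehat u^0$ it suffices to replace $\widehat v^0$ by $\widehat v^0_\chi$ in \eqref{reg2}. Using $e_y(\widehat u^0)=\sum_i e_y(\widehat\chi^i)(u^a_i-u^b_i)_{|\Sigma}$ and the analogous expression for $e_y(\widehat v^0_\chi)$, direct expansion of $a^M\,e_y(\widehat u^0):e_y(\widehat v^0_\chi)$ and integration in $y$ produces the Robin-type interface contribution $\int_\omega H(u^a_{|\Sigma}-u^b_{|\Sigma})\cdot(v^a_{|\Sigma}-v^b_{|\Sigma})\,dx'$ with the coefficients \eqref{H}; symmetry of $H$ follows from the symmetry of $a^M$. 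Substituting into \eqref{reg2} yields \eqref{inflim2-bis}. I expect the main obstacle to be the careful bookkeeping of boundary traces for $\widehat z$ and checking that the coercivity/Korn argument for $\widehat V_0$ still applies in $Y^0$ in the presence of the open cracks $S^0$; once $\widehat z=0$ is established, the remaining computation is essentially bilinear algebra on the correctors.
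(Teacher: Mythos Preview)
Your argument is correct and follows the same strategy as the paper: test \eqref{reg2} with $(0,0,\psi)$, $\psi\in L^2(\omega;\widehat V_0)$, to obtain the cell equation for $\widehat u^0$; use the correctors $\widehat\chi^i$ to identify $\widehat u^0$; and then plug the corrector-based test field $\widehat v^0_\chi$ into \eqref{reg2} to arrive at \eqref{inflim2-bis}. The paper's proof is terser --- it simply says ``using the corrector displacements we express $\widehat u^0$'' and then takes $\widehat v^0=\widehat v^0_\chi$ directly --- whereas you spell out the uniqueness step via $\widehat z$ and also note that a general $\widehat v^0$ can be replaced by $\widehat v^0_\chi$ thanks to the cell equation; both additions are correct and harmless. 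Your final caveat about Korn's inequality on $\widehat V_0$ in the cracked cell $Y^0$ is exactly the point the paper relies on implicitly (cf.\ Step~3 in the proof of Proposition~\ref{prunk}, where $Y^0$ is treated as a Korn-domain with vanishing data on $\partial Y$); since the crack set is at distance $\geq\eta$ from $\partial Y$, the top and bottom faces lie in $Y^0$ and the argument goes through.
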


\begin{proof}
Take $\widehat w\in \widehat V_0$ as test-displacement in  \eqref{reg2}. That gives
$$\forall \widehat w\in \widehat V_0\qquad \int_{\omega\times Y^0} a^M e_y(\widehat u^0) : e_y(\widehat w) \,dx^{'} dy=0.$$
Hence, using the corrector displacements $\widehat \chi^i$, $i=1,2,3$, we express $\widehat u^0$.
\smallskip

Let $(v^a, v^b)$ be in $H^1(\O^a;\R^3)\times H^1_\Gamma(\O^b;\R^3)$, we set $\ds \widehat v^0=\sum_{i=1}^3 \widehat{\chi}^{i} v^a_{i|\Sigma}+\sum_{i=1}^3 ({\bf e}_i- \widehat{\chi}^{i}) v^b_{i|\Sigma}$. We consider ${\bf V}=(v^b, v^a, \widehat v^{0})$ as test function in  \eqref{reg2}. Then we develop $\ds\int_{\omega\times Y^0} a^M e_y(\widehat u^0) : e_y(\widehat v^0) \,dx^{'} dy$ and we obtain \eqref{inflim2-bis}.
\end{proof} 
\medskip

This work was supported by Deutsche Forschungsgemeinschaft (Grants No. OR 190/4-2 and OR 190/6-1).

\end{document}